\documentclass[12pt, a4paper]{article}

\usepackage[T2A]{fontenc}
\usepackage[utf8]{inputenc}
\usepackage[russian,english]{babel} 
\usepackage{amsmath, amsthm, amssymb}
\usepackage{geometry}
\usepackage{enumitem}
\usepackage{setspace} 
\usepackage{hyperref}

\usepackage{biblatex}
\newtheorem{theorem}{Theorem}
\newtheorem{lemma}{Lemma}
\newtheorem{proposition}{Proposition}

\title{Strong invariance principles for diffusions, Markov chains and their perturbations.}

\author{%
  V. Konakov\thanks{National Research University Higher School of Economics (HSE), Russian Federation.}
  \and
  D. Kucher\thanks{National Research University Higher School of Economics (HSE), Moscow, Russia.}
  \and
  E. Mammen\thanks{Institute for Mathematics, Heidelberg University, Germany, \texttt{mammen@math.uni-heidelberg.de}}
}

\date{} 

\begin{document}

\maketitle

\noindent \textbf{Keywords:} strong approximation, Markov chains, perturbed diffusions, coupling, parametrix method.\\
\noindent \textbf{MSC 2020:} Primary: 60H10, 60J05; Secondary: 60F15.

\begin{center}
    \textbf{Abstract}
\end{center}
\par
In this paper, we construct strong approximations for discrete-time Markov chains weakly converging to continuous diffusion processes, as well as for their perturbed counterparts. Under the assumption of bounded coefficients, we construct closely coupled versions of these processes on a shared probability space. In particular, for both non-degenerate and degenerate cases, we maximize the probability of their exact pathwise coincidence on discrete time grids. Moreover, we construct such probability space that the probability of small deviation of the interpolated Markov chain from the continuous diffusion trajectory is small on the entire time interval if the perturbation is small enough.
\vspace{1em}

\section*{Introduction}

We will be interested in establishing strong invariance principles for four different models including:
\begin{enumerate}
    \item perturbed and unperturbed nondegenerate diffusions;
    \item perturbed and unperturbed nondegenerate Markov chains;
    \item perturbed and unperturbed Kolmogorov type degenerate diffusions;
    \item nondegenerate diffusion and approximating nondegenerate Markov chain.
\end{enumerate}

Using the appropriate coupling method (see e.g. \cite{den2012probability}), we construct the versions of the original and perturbed processes on a common probability space coinciding with probability tending to one. For this probability we give also the rate of convergence to one in terms of perturbation characteristics. 

As is well known, the rate of convergence to one of the coincidence probabilities is related to the rate of convergence to zero of the total variation distance. The stability of solutions of stochastic equations with respect to perturbations of their parameters is an intensively studied topic (see, e.g. \cite{galeati2023stability} and references therein). However, for our purposes, we will be interested in the stability of transition densities. Therefore, the results of this work are based on the previously obtained results on the rate of convergence to zero of the $L^1$-distance between transition densities of perturbed and unperturbed processes. Cases 1) and 2) use the results of \cite{ESAIM}, case 3) -- the result in \cite{Kozhina}, case 4) -- the result of \cite{Bitter}.
\\

Let $T > 0$ be a fixed time horizon.
Consider the SDE:
\begin{equation} \label{eq:diff}
d X_t = b(t, X_t) dt + \sigma(t, X_t) d W_t, \quad t \in [0, T],
\end{equation}
where $W_t$ is a standard $d$-dimensional Brownian motion, and the coefficients $b$ and $\sigma$ are bounded, measurable in time, and H\"{o}lder continuous in space.

We also consider a perturbed version of equation \eqref{eq:diff}, where the coefficients are replaced by approximations $b_\epsilon$ and $\sigma_\epsilon$:
\begin{equation} \label{eq:diff_pert}
d X^\epsilon_t = b_\epsilon(t, X^\epsilon_t) dt + \sigma_\epsilon(t, X^\epsilon_t) d W_t.
\end{equation}

We introduce the corresponding discrete-time Markov chains defined on a time grid $0 = t_0 < t_1 < \dots < t_N = T$ with step size $h = t_{n+1} - t_n$. The Markov chain associated with \eqref{eq:diff} is given by:
\begin{equation} \label{eq:chain}
Y_{t_{i+1}} = Y_{t_i} + b(t_i, Y_{t_i}) h + \sigma(t_i, Y_{t_i}) \xi_{i+1},
\end{equation}

where $\{\xi_i\}_{i=1}^N$ are centered random variables.  We also consider a perturbed version of (3):

\begin{equation} \label{eq:chain_pert}
Y^\epsilon_{t_{i+1}} = Y^\epsilon_{t_i} + b_\epsilon(t_i, Y^\epsilon_{t_i}) h + \sigma_\epsilon(t_i, Y^\epsilon_{t_i}) \xi_{i+1}.
\end{equation}

We assume that in the case of analyzing perturbed processes, the errors $\xi_i$ are independent. In the cross-approximation setting (when comparing a Markov chain to a diffusion), the family of errors $\xi_i$ satisfies the following Markov property:
\begin{equation} \label{eq:markov_prop}
\mathcal{L}\left(\xi_{i+1} \mid Y_{t_i} = y_i, \dots \right) = q_{t_i, y_i}(\cdot).
\end{equation}

In the degenerate case, the diffusion equation takes the form:

\begin{equation*}
\begin{cases}
d X_t = b(X_t, Y_t) dt + \sigma(X_t, Y_t) d W_t, \\
d Y_t = X_t dt, \quad t \in [0, T],
\end{cases}
\end{equation*}

where $W_t$ is a standard $d$-dimensional Brownian motion. This system is classified as degenerate because the total diffusion matrix for the full vector of coefficients is singular. To see this, we rewrite the system in a unified vector form by defining the full vector $Z_t = (X_t, Y_t)^\top \in \mathbb{R}^{2d}$:
\begin{equation*}
d Z_t = B(Z_t) dt + \Sigma(Z_t) d W_t,
\end{equation*}
where the drift vector $B(Z_t)$ and the matrix $\Sigma(Z_t) \in \mathbb{R}^{2d \times d}$ are given by:
\begin{equation*}
B(Z_t) = \begin{pmatrix} b(X_t, Y_t) \\ X_t \end{pmatrix}, \quad 
\Sigma(Z_t) = \begin{pmatrix} \sigma(X_t, Y_t) \\ 0_{d \times d} \end{pmatrix}.
\end{equation*}

The infinitesimal generator associated with this diffusion process depends on the $2d \times 2d$ diffusion matrix $A(Z_t)$, which is constructed as:
\begin{equation*}
A(Z_t) = \Sigma(Z_t)\Sigma(Z_t)^\top = \begin{pmatrix} \sigma(X_t, Y_t)\sigma(X_t, Y_t)^\top & 0_{d \times d} \\ 0_{d \times d} & 0_{d \times d} \end{pmatrix}.
\end{equation*}

Denote the distances between the original and perturbed coefficients:
\begin{align*}
\Delta_{\epsilon,\sigma,\gamma} &= \sup_{t \in [0,T]} |\sigma(t, \cdot) - \sigma_\epsilon(t, \cdot)|_\gamma, \\
\Delta_{\epsilon,b,q} &= \sup_{t \in [0,T]} \|b(t, \cdot) - b_\epsilon(t, \cdot)\|_{L^q(\mathbb{R}^d)},
\end{align*}
where $|\cdot|_\gamma$ denotes the H\"{o}lder norm and $\|\cdot\|_{L^q}$ is the $L^q$ norm. We define the total perturbation parameter as:
\begin{equation*}
\Delta_{\epsilon,\gamma,q} := \Delta_{\epsilon,\sigma,\gamma} + \Delta_{\epsilon,b,q}.
\end{equation*}

We fix a macro and micro time grids on the interval $[0, T]$. Specifically, a micro-grid is a set $\mathbb{T}_n = \{t_0, t_1, \dots, t_n\}$ with $0 = t_0 < t_1 < \dots < t_n = T$ and a step size $h = T/n$, $t_i = i/n$ for $i = 0, \dots, n$. A macro-grid is a subset $\mathbb{T}_m = \{\tau_1, \dots, \tau_m\}$ of $\mathbb{T}_n$ with equal spacing between adjacent points, where $n = m^r$ for some $r > 1$.

Our objective is to construct closely coupled versions of either the perturbed Markov chain or the perturbed diffusion processes on a shared probability space, maximizing the probability that their paths coincide at the nodes of the micro-grid or macro-grid. In the context of a continuous diffusion and a weakly converging Markov chain, our goal is to construct a version of the chain that exhibits the same pathwise coincidence property on the grid. Furthermore, in this case, we construct an interpolated version of the Markov chain that, with high probability, stays close to the diffusion process over the entire time horizon.

We define the linear interpolation of a Markov chain as follows:
\begin{equation*}
Y(t) = \sum_{k=0}^{N-1} \left( \frac{t_{k+1} - t}{t_{k+1} - t_k} Y_{t_k} + \frac{t - t_k}{t_{k+1} - t_k} Y_{t_{k+1}} \right) \mathbb{I}_{[t_k, t_{k+1})}(t) + Y_{t_m} \mathbb{I}_{t_m}(t).
\end{equation*}

\section*{Assumptions}

For a fixed integer $M > 2d + 5 + \gamma$, we assume that the sequence of noises $(\xi_k)_{k \ge 1}$ consists of centered random variables having a $C^5$ density $f_\xi$. This density, along with its partial derivatives up to order 5, is assumed to satisfy a polynomial decay of order $M$. Specifically, for all $z \in \mathbb{R}^d$ and any multi-index $\nu$ such that $|\nu| \le 5$, the following bound holds:
\begin{equation*}
|D^\nu f_\xi(z)| \le C Q_M(z),
\end{equation*}
where $Q_r(z)$ denotes the rational function for any $r > d$ as:
\begin{equation*}
Q_r(z) := \frac{c_r}{(1 + |z|)^r}.
\end{equation*}
The constant $c_r$ is chosen such that the function integrates to 1: $\int_{\mathbb{R}^d} Q_r(z) dz = 1$.

\paragraph{(A1) (Bounded coefficients).}
The components of the vector-valued function $b(x, y)$ and the matrix-valued function $\sigma(x, y)$ are measurable and bounded. Specifically, there exist constants $K_1, K_2, K_3 > 0$ such that:
\begin{align*}
\sup_{(x,y) \in \mathbb{R}^{2d}} |b(x, y)| + \sup_{(x,y) \in \mathbb{R}^{2d}} |\sigma(x, y)| &\le K_1, \\
\sup_{(x,y) \in \mathbb{R}^{2d}} |b_\epsilon(x, y)| + \sup_{(x,y) \in \mathbb{R}^{2d}} |\sigma_\epsilon(x, y)| &\le K_2 \\
\end{align*}

\paragraph{(A2) (Uniform Ellipticity).}
The covariance matrices $a := \sigma\sigma^*$ and $a_\epsilon = \sigma_\epsilon\sigma_\epsilon^*$ are uniformly elliptic, i.e., there exists a constant $\lambda \ge 1$ such that for all $(x, y, \xi) \in (\mathbb{R}^d)^3$:
\begin{align*}
\lambda^{-1}|\xi|^2 \le \langle a(x, y)\xi, \xi \rangle &\le \lambda|\xi|^2, \\
\lambda^{-1}|\xi|^2 \le \langle a_\epsilon(x, y)\xi, \xi \rangle &\le \lambda|\xi|^2.
\end{align*}

\paragraph{(A3) (H\"{o}lder and Lipschitz continuity in space).}

Coefficients $b, b_\epsilon, \sigma, \sigma_\epsilon$ satisfy the following conditions: for all pairs $(t, x), (s, y) \in [0, 1] \times \mathbb{R}^d$, there exist constants $A, B, \alpha_1, \beta, \gamma_2 > 0$ such that:
\begin{align*}
|\sigma(t, x) - \sigma(s, y)| + |\sigma_\epsilon(t, x) - \sigma_\epsilon(s, y)| &\le A(|x - y|^{\gamma_2} + |t - s|^{\alpha_2}), \\
|b(t, x) - b(s, y)| + |b_\epsilon(t, x) - b_\epsilon(s, y)| &\le B(|x - y| + |t - s|^\beta), \quad \gamma_2, \beta, \alpha_1 \le 1.
\end{align*}

\paragraph{(AD3) (H\"{o}lder continuity in space for the degenerate case).}
In the degenerate case there exist positive constants $K > 0$ and $0 < \gamma_3 \le 1$ such that:
\begin{equation*}
|b(x, y) - b(x', y')| + |\sigma(x, y) - \sigma(x', y')| \le K \Big( |x - x'|^{\gamma_3} + |y - y'|^{\gamma_3/3} \Big),
\end{equation*}
for all $(x, y), (x', y') \in \mathbb{R}^d \times \mathbb{R}^d$.

\paragraph{(A4) (Regularity of the density family).}
The family of densities \eqref{eq:markov_prop} decays polynomially at infinity, uniformly in $\epsilon$, along with its derivatives. Furthermore, it satisfies the following conditions with some constant $C > 0$:
\begin{align*}
|D^\nu_z q_{t,x}(z)| &\le C Q_S(z) \quad \forall (t, x) \in [0, 1] \times \mathbb{R}^d, \; z \in \mathbb{R}^d, \\
|q_{t,x}(z) - q_{t,y}(z)| &\le C \sqrt{n} |x - y| Q_S(z) \quad \forall (x, y, z) \in (\mathbb{R}^d)^3,
\end{align*}
for some $S > 2d + 6$ and $|\nu| \le 5$.

We will study both the degenerate and non-degenerate cases for $\sigma$. 
In the case where $\sigma\sigma^*$ is uniformly elliptic, we have the following bound for the difference between the transition densities $p^h$ of the chain \eqref{eq:chain} and $p^h_\epsilon$ of the perturbed chain \eqref{eq:chain_pert} (Theorems 1.1 and 1.3 from \cite{ESAIM}):
\begin{equation} \label{eq:bound1}
|p^h(t_i, t_j, x, y) - p^h_\epsilon(t_i, t_j, x, y)| \le C(q)\Delta_{\epsilon,\gamma,q}\chi_c(t_j - t_i, y - x),
\end{equation}
and also for transition densities of diffusions \eqref{eq:diff} and \eqref{eq:diff_pert} we have:
\begin{equation} \label{eq:bound2}
|p(t_i, t_j, x, y) - p_\epsilon(t_i, t_j, x, y)| \le C(q)\Delta_{\epsilon,\gamma,q}\rho_c(t_j - t_i, y - x),
\end{equation}
where $q > d$, and $\gamma$ is the H\"{o}lder continuity exponent of the coefficients. Functions $\chi_c$ and $\rho_c$ are defined as:
\begin{align*}
\chi_c(u, z) &:= \frac{c^d}{u^{d/2}} Q_{M-(d+5+\gamma)}\left( \frac{|z|}{u^{1/2}/c} \right), \\
\rho_c(u, z) &= \frac{c^{d/2}}{(2\pi u)^{d/2}} \exp\left( -c \frac{|z|^2}{2u} \right).
\end{align*}

\section*{Main results}

\begin{theorem} \label{thm:1}
When A1, A2, A3 hold, there exists a probability space supporting versions of the processes $Y_{\tau_k}$ and $Y^{(\epsilon)}_{\tau_k}$, where $Y$ and $Y^\epsilon$ are both chain and perturbed chain, or diffusion and perturbed diffusion, and a positive constant $C$ such that the probability of their paths coinciding on the entire grid satisfies:
\begin{equation*}
\mathbb{P} \left( Y_{\tau_k} = Y^{(\epsilon)}_{\tau_k} \text{ for all } k = 1, \dots, m \right) \ge 1 - C m \Delta_{\epsilon,\gamma,q}.
\end{equation*}
\end{theorem}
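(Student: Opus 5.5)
We will build the coupling step by step along the macro-grid, using at each step a maximal coupling of the one-step transition laws. Let $P_k(x,\cdot)$ and $P^\epsilon_k(x,\cdot)$ be the transition kernels of $Y$ and $Y^\epsilon$ from $\tau_{k-1}$ to $\tau_k$; set $\tau_0=0$ if needed. Both chain-on-micro-grid observed on the macro-grid and diffusions are Markov, so these kernels exist. Their densities are $p^h(\tau_{k-1},\tau_k,x,\cdot)$ and $p^h_\epsilon(\tau_{k-1},\tau_k,x,\cdot)$ in the chain case, and $p$, $p_\epsilon$ in the diffusion case.

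\textbf{Step 1 (one-step total variation bound).} Integrating \eqref{eq:bound1} or \eqref{eq:bound2} in $y$, we get for every $x$ and $k$
\begin{equation*}
\|P_k(x,\cdot)-P^\epsilon_k(x,\cdot)\|_{TV}\le \tfrac12\int_{\mathbb{R}^d}|p(\tau_{k-1},\tau_k,x,y)-p_\epsilon(\tau_{k-1},\tau_k,x,y)|\,dy\le C(q)\Delta_{\epsilon,\gamma,q}.
\end{equation*}
Here we use that $\chi_c(u,\cdot)$ and $\rho_c(u,\cdot)$ integrate to a constant independent of $u$ and $x$. For $\chi_c$ this follows by the substitution $z=u^{1/2}w/c$ together with $M-(d+5+\gamma)>d$, which is guaranteed by $M>2d+5+\gamma$. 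For $\rho_c$ it is a Gaussian integral. The bound is uniform in $x$, which is the key feature.

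\textbf{Step 2 (Markovian maximal coupling).} For each pair $(x,x')$ we take the maximal coupling $\Gamma_k((x,x'),\cdot)$ of $P_k(x,\cdot)$ and $P^\epsilon_k(x',\cdot)$. It has common mass $\min(p,p_\epsilon)$ on the diagonal, and the residual parts are taken independent. Written through densities, this kernel is jointly measurable in $(x,x')$. We define a Markov chain $(Y_{\tau_k},Y^\epsilon_{\tau_k})$ on $(\mathbb{R}^d)^2$ with initial point $(x_0,x_0)$ and kernels $\Gamma_k$; the Ionescu--Tulcea theorem supplies the probability space.

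By construction the marginals are Markov with kernels $P_k$ and $P^\epsilon_k$ respectively. To see this, note that the first coordinate's conditional law given the joint past is $P_k(Y_{\tau_{k-1}},\cdot)$ and depends only on $Y_{\tau_{k-1}}$. Hence the marginal laws are correct, and these are versions of the processes. If the full micro-grid paths or continuous paths are wanted, one can reconstruct them afterwards by sampling bridges conditionally; this step is not needed for the statement.

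\textbf{Step 3 (union bound).} On the event $\{Y_{\tau_{k-1}}=Y^\epsilon_{\tau_{k-1}}=x\}$, the maximal coupling gives
\begin{equation*}
\mathbb{P}(Y_{\tau_k}\ne Y^\epsilon_{\tau_k}\mid\mathcal{F}_{k-1})=\|P_k(x,\cdot)-P^\epsilon_k(x,\cdot)\|_{TV}\le C\Delta_{\epsilon,\gamma,q}.
\end{equation*}
Let $\kappa$ be the first index with $Y_{\tau_\kappa}\neq Y^\epsilon_{\tau_\kappa}$. Then
\begin{equation*}
\mathbb{P}(\kappa\le m)\le\sum_{k=1}^m\mathbb{P}(Y_{\tau_{k-1}}=Y^\epsilon_{\tau_{k-1}},\,Y_{\tau_k}\neq Y^\epsilon_{\tau_k})\le Cm\Delta_{\epsilon,\gamma,q},
\end{equation*}
which is the claim.

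\textbf{Expected main obstacle.} The delicate point is Step 1 in the chain case. The quoted bound \eqref{eq:bound1} must apply between arbitrary grid points $\tau_{k-1},\tau_k$ of the micro-grid, with a constant uniform in the starting point and in the time gap. We also need $\chi_c$ to be integrable uniformly; this is where the moment condition $M>2d+5+\gamma$ enters.

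A secondary technical issue is the measurability of the maximal coupling kernel in $(x,x')$. I would handle it by writing the coupling explicitly through the densities:
\begin{itemize}
\item the diagonal part uses $\min(p,p_\epsilon)$;
\item the off-diagonal part is the product of the normalized positive parts.
\end{itemize}
Both are measurable functions of $(x,x',y,y')$.
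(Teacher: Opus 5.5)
Your proof is correct, but it takes a genuinely different route from the paper.

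The paper first proves Lemma~\ref{lem:2}, a telescoping bound $\int|P^m_x-P^m_{\epsilon,x}|\,dz\le Cm\Delta_{\epsilon,\gamma,q}$ for the joint densities of the whole vector of grid values. It then applies one global maximal coupling to these two $md$-dimensional laws. It samples $V\sim P^m_x$, keeps it with probability $\min(P^m_x,P^m_{\epsilon,x})(V)/P^m_x(V)$, and otherwise draws $V^\epsilon$ from the normalized residual. Then $\mathbb{P}(V\neq V^\epsilon)$ is exactly the total variation distance between the joint laws.

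You instead maximally couple the one-step kernels along the macro-grid and control the first disagreement time by a union bound. This bypasses Lemma~\ref{lem:2} altogether; your union bound is the probabilistic counterpart of its telescoping sum. It needs only the one-step estimate of Lemma~\ref{lem:1}, uniform in the starting point, which you correctly identify as the key input.

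Your construction buys causality. The pair $(Y_{\tau_k},Y^\epsilon_{\tau_k})_k$ is itself a Markov chain, and each coordinate is Markov with respect to the joint filtration. In the paper's coupling, by contrast, the acceptance decision, and hence already $Y^\epsilon_{\tau_1}$, depends on the entire trajectory $V$.

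The paper's construction buys optimality. Its coincidence probability $1-\|P^m_x-P^m_{\epsilon,x}\|_{TV}$ is the largest achievable by any coupling, so any sharper estimate of the joint distance would improve the theorem directly. Your sequential coupling's failure probability is the expected accumulated one-step distance along the agreeing path, which can be strictly larger.

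For the bound $Cm\Delta_{\epsilon,\gamma,q}$ actually claimed, the two arguments give the same result.
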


\begin{theorem} \label{thm:2}
In the degenerate case where A1, A2, AD3 hold, there exists a probability space supporting versions of the processes $V_{t_k} = (\tilde{X}_{t_k}, \tilde{Y}_{t_k})$ and $V^\epsilon_{t_k} = (\tilde{X}^{(\epsilon)}_{t_k}, \tilde{Y}^{(\epsilon)}_{t_k})$ and a positive constant $C$ such that the probability of their paths coinciding on the entire grid satisfies:
\begin{equation*}
\mathbb{P} \left( V^\epsilon_{t_k} = V_{t_k} \text{ for all } k = 1, \dots, m \right) \ge 1 - C m \bar{\Delta}_{\epsilon,\gamma,q}.
\end{equation*}
\end{theorem}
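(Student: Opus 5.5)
The plan is to repeat the maximal-coupling argument used for Theorem \ref{thm:1}, now applied to the degenerate Kolmogorov-type pair $V=(\tilde X,\tilde Y)$ and $V^\epsilon$. I take as given the transition-density stability estimate of \cite{Kozhina}. For the Markov (diffusion or chain) pair $V$, $V^\epsilon$ on $\mathbb{R}^{2d}$, this result yields a bound of the form
\begin{equation*}
\sup_{z}\int_{\mathbb{R}^{2d}} |p(t_k,t_{k+1},z,w)-p_\epsilon(t_k,t_{k+1},z,w)|\,dw \le C\bar{\Delta}_{\epsilon,\gamma,q}.
\end{equation*}
This follows by integrating the pointwise bound of \cite{Kozhina}, whose majorant is a Kolmogorov-type Gaussian kernel. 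That kernel has the anisotropic scaling $|x'-x|^2/u + |y'-y-ux|^2/u^3$, so it integrates to a constant uniformly in the starting point $z$ and in the time step. Here $\bar\Delta_{\epsilon,\gamma,q}$ is the analogue of $\Delta_{\epsilon,\gamma,q}$ under (AD3), with H\"older norms taken in the anisotropic metric $|x|^{\gamma_3}+|y|^{\gamma_3/3}$. This uniform total-variation bound between one-step kernels is the only analytic input.

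The construction is then sequential. Start both processes at the same point $V_{t_0}=V^\epsilon_{t_0}=z_0$. Suppose $(V_{t_0},\dots,V_{t_k})$ and $(V^\epsilon_{t_0},\dots,V^\epsilon_{t_k})$ have been built on a product space. Given the current pair of states $(z,z^\epsilon)$, draw $(V_{t_{k+1}},V^\epsilon_{t_{k+1}})$ from a maximal coupling $\pi_{z,z^\epsilon}$ of $p(t_k,t_{k+1},z,\cdot)$ and $p_\epsilon(t_k,t_{k+1},z^\epsilon,\cdot)$. I will use the standard construction via the common part $\min(p,p_\epsilon)$, which can be made measurable in $(z,z^\epsilon)$, using one fresh independent uniform variable per step. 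Each marginal chain then has the correct transition kernel. By the Markov property this gives the correct finite-dimensional laws on the grid, and hence versions of the processes. Moreover, on the event $\{V_{t_k}=V^\epsilon_{t_k}=z\}$ we have
\begin{equation*}
\mathbb{P}\left(V_{t_{k+1}}\neq V^\epsilon_{t_{k+1}}\mid \mathcal F_k\right) = \tfrac12\|p(t_k,t_{k+1},z,\cdot)-p_\epsilon(t_k,t_{k+1},z,\cdot)\|_{L^1}\le C\bar\Delta_{\epsilon,\gamma,q}.
\end{equation*}
Let $A_k=\{V_{t_j}=V^\epsilon_{t_j},\ j\le k\}$. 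Then $\mathbb{P}(A_{k+1}^c)\le \mathbb{P}(A_k^c)+\mathbb{E}[\mathbf 1_{A_k}\mathbb{P}(V_{t_{k+1}}\ne V^\epsilon_{t_{k+1}}\mid\mathcal F_k)]\le \mathbb{P}(A_k^c)+C\bar\Delta_{\epsilon,\gamma,q}$. Iterating $m$ times gives $\mathbb{P}(A_m)\ge 1-Cm\bar\Delta_{\epsilon,\gamma,q}$. If the continuous paths are also wanted, the diffusion bridges between the coupled grid points can be glued in via conditional laws without affecting this bound.

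The main obstacle I expect is the first step: turning the \cite{Kozhina} estimate into a bound on the $L^1$ distance that is uniform in the starting point and in the step length. The difficulty is twofold. The degenerate majorant must be checked to have bounded mass: after the change of variables $y'\mapsto (y'-y-ux)/u^{3/2}$, the Jacobian $u^{3d/2}\cdot u^{d/2}$ cancels the prefactor, so this is fine. Also, for small steps the pointwise bound might carry a singular factor in $u$; if it does, I would first try to show that the perturbation estimate is $\bar\Delta$ times a density of bounded mass, with the singularity absorbed by the H\"older gain $u^{\gamma_3/2}$ in the parametrix series. Measurability of the maximal coupling kernel is routine but should be stated.
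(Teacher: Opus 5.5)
Your proof is correct, and its analytic input is the same as in the paper's Lemma~\ref{lem:3}. There, the pointwise bounds of Theorems~\ref{thm:5} and~\ref{thm:6} are integrated against $p_{c,K}$, which has unit mass uniformly in the starting point and in $t_j-t_i$, so the singular-factor issue you anticipate does not arise. The difference lies in how the coupling is built. The paper first telescopes the product of transition densities (Lemma~\ref{lem:3}, repeating the argument of Lemma~\ref{lem:2}). This gives $\|P^m_{(x,y)}-P^m_{\epsilon,(x,y)}\|_{TV}\le Cm\bar\Delta_{\epsilon,\gamma,q}$ for the joint laws on $\mathbb{R}^{m\times 2d}$. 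It then applies one global maximal coupling: $V$ is drawn from $P^m_{(x,y)}$ and kept with probability $H(V)/P(V)$; otherwise $V^\epsilon$ is redrawn from the residual density. Hence $\mathbb{P}(V\neq V^\epsilon)$ equals the joint total variation distance exactly. You instead build a Markovian coupling from one-step maximal couplings and do the telescoping on events, $\mathbb{P}(A_{k+1}^c)\le\mathbb{P}(A_k^c)+C\bar\Delta_{\epsilon,\gamma,q}$. The paper's construction is optimal for the grid vector, so any sharper bound on the joint total variation distance than the $m$-fold sum would transfer to the coincidence probability at once. A Markovian coupling is in general not maximal and is tied to accumulating one-step defects, although here both routes give the same $Cm\bar\Delta_{\epsilon,\gamma,q}$. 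In exchange, your route needs neither joint densities on $\mathbb{R}^{m\times 2d}$ nor the telescoping lemma, and it is causal. The pair $(V,V^\epsilon)$ is jointly Markov, and $V^\epsilon_{t_0},\dots,V^\epsilon_{t_k}$ are conditionally independent of the future of $V$ given $V_{t_0},\dots,V_{t_k}$. This makes gluing in bridges or extending the construction in time straightforward. In the paper's coupling, by contrast, already $V^\epsilon_{t_1}$ depends on the whole realized path $V$ through the acceptance probability $H(V)/P(V)$. The only extra technical point on your route is choosing the one-step maximal coupling measurably in $(z,z^\epsilon)$, which you already flag.
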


The distance $\bar{\Delta}_{\epsilon,\gamma,q}$ between the original and perturbed coefficients is defined as follows:
\begin{equation*}
\bar{\Delta}_{\epsilon,\gamma,q} := \|\sigma - \sigma^\epsilon\|_{d,\gamma} + \|b - b^\epsilon\|_{L^q(\mathbb{R}^{2d})},
\end{equation*}
where $q \in (1, +\infty]$, $\gamma \in (0, 1]$, and the norms take the following forms on the space $\mathbb{R}^{2d}$:
\begin{align*}
\|\sigma - \sigma^\epsilon\|_{d,\gamma} &:= \sup_{z \in \mathbb{R}^{2d}} |\sigma(z) - \sigma^\epsilon(z)| + \sup_{z \ne z' \in \mathbb{R}^{2d}} \frac{|(\sigma - \sigma^\epsilon)(z) - (\sigma - \sigma^\epsilon)(z')|}{\left( |x - x'| + |y' - y|^{1/3} \right)^\gamma}, \\
\|b - b^\epsilon\|_{L^q(\mathbb{R}^{2d})} &:= \left( \int_{\mathbb{R}^{2d}} |b(z) - b^\epsilon(z)|^q dz \right)^{1/q}.
\end{align*}

\begin{theorem} \label{thm:3}
Assume that conditions A1, A2, A3, A4 hold. Let $n$ be the number of steps in the micro-grid, and let $0 < \tau_1 < \dots < \tau_m = T$ be a macroscopic grid. Then, there exists a probability space supporting versions of the continuous diffusion process $X_t$ and the discrete Markov chain $Y_{t_k}$ and a positive constant $C$ such that the probability of their paths coinciding on the macroscopic grid satisfies:
\begin{equation*}
\mathbb{P} \left( X_{\tau_k} = Y_{\tau_k} \text{ for all } k = 1, \dots, m \right) \ge 1 - C m \Delta^{(n)},
\end{equation*}
where the error is defined as
\begin{align*}
\Delta^{(n)} &= \frac{1}{n^{\min(\gamma_2/2,\alpha_2,\beta)}} + \Delta_a + \Delta_b, \\
\Delta_b &= \sup_{(t,x) \in [0,1] \times \mathbb{R}^d} |b(t, x) - b_\epsilon(t, x)|, \\
\Delta_a &= \sup_{(t,x) \in [0,1] \times \mathbb{R}^d} |a(t, x) - a_\epsilon(t, x)|,
\end{align*}
and the coefficients $\gamma_2/2, \alpha_2, \beta$ are defined in A3.
\end{theorem}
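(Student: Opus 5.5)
We follow the same coupling scheme as in Theorems~\ref{thm:1} and~\ref{thm:2}, but feed it a different $L^1$ estimate. The basic tool is the maximal coupling lemma (see \cite{den2012probability}). Given two probability densities $p,q$ on $\mathbb{R}^d$, there is a pair $(U,V)$ with marginals $p$ and $q$ such that
\[
\mathbb{P}(U\neq V)=\tfrac12\|p-q\|_{L^1}.
\]
We also need a measurable version of this lemma: when $p=p(\cdot\,|x)$ and $q=q(\cdot\,|x)$ depend measurably on a parameter $x$, the coupling can be realized as $(U,V)=\Phi(x,\theta)$, with $\Phi$ measurable and $\theta$ uniform on $[0,1]$ and independent of everything else. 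The explicit construction (common part $\min(p,q)$, plus independent draws from the normalized excesses) gives this directly.

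The construction runs along the macro-grid $\tau_0=0<\tau_1<\dots<\tau_m=T$. Both $(X_{\tau_k})_k$ and $(Y_{\tau_k})_k$ are Markov in $k$. The diffusion has transition density $p(\tau_k,\tau_{k+1},x,\cdot)$. The chain has the $(n/m)$-step density $p^h(\tau_k,\tau_{k+1},x,\cdot)$; this is where the Markov property \eqref{eq:markov_prop} and the smoothness in A4 are used. We start both processes at the same point. Inductively, suppose $(X_{\tau_k},Y_{\tau_k})$ has been built.
\begin{itemize}
\item On $\{X_{\tau_k}=Y_{\tau_k}=x\}$, draw $(X_{\tau_{k+1}},Y_{\tau_{k+1}})$ from the maximal coupling of $p(\tau_k,\tau_{k+1},x,\cdot)$ and $p^h(\tau_k,\tau_{k+1},x,\cdot)$, using a fresh uniform $\theta_k$.
\item Off this event, draw the two next states independently from their own transition laws.
\end{itemize}
In both cases the conditional law of each coordinate is the correct transition kernel, so the marginals are versions of the skeletons $(X_{\tau_k})_k$ and $(Y_{\tau_k})_k$. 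The full diffusion path and the full chain on the micro-grid are then filled in by conditional bridges (Ionescu--Tulcea, or Berkes--Philipp gluing). This fill-in does not affect the grid values.

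The union bound gives
\[
\mathbb{P}(\exists k:\ X_{\tau_k}\ne Y_{\tau_k})\le\sum_{k=0}^{m-1}\mathbb{P}\bigl(X_{\tau_{k+1}}\ne Y_{\tau_{k+1}},\ X_{\tau_k}=Y_{\tau_k}\bigr).
\]
Each summand is bounded by
\[
\tfrac12\sup_x\int\bigl|p(\tau_k,\tau_{k+1},x,y)-p^h(\tau_k,\tau_{k+1},x,y)\bigr|\,dy .
\]
It therefore suffices to show that, uniformly in $x$ and $k$,
\[
\bigl\|p(\tau_k,\tau_{k+1},x,\cdot)-p^h(\tau_k,\tau_{k+1},x,\cdot)\bigr\|_{L^1}\le C\Delta^{(n)}.
\]
Summing over $k$ then gives $1-Cm\Delta^{(n)}$.

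The main obstacle is this uniform $L^1$ bound. We take it from the parametrix comparison result of \cite{Bitter}. That result compares the transition density of the chain driven by innovations with law $q_{t,x}$ (whose local covariance is $a_\epsilon$ and local drift is $b_\epsilon$) with the diffusion density. It gives the pointwise bound
\[
|p-p^h|(\tau_k,\tau_{k+1},x,y)\le C\bigl(n^{-\min(\gamma_2/2,\alpha_2,\beta)}+\Delta_a+\Delta_b\bigr)\,\bar\rho(\tau_{k+1}-\tau_k,y-x),
\]
where $\bar\rho$ is a mixture of a Gaussian kernel $\rho_c$ and a polynomial kernel $\chi_c$. Integrating in $y$ gives a constant, because $\rho_c$ and $\chi_c$ have uniformly bounded mass.

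To apply \cite{Bitter} we must check its hypotheses. The Hölder regularity in $(t,x)$ comes from A3, which is what yields the rate $n^{-\min(\gamma_2/2,\alpha_2,\beta)}$. The decay $S>2d+6$ and the $\sqrt n$-Lipschitz dependence in $x$ come from A4; they are needed so that the parametrix series for the chain converges with kernels integrable near the diagonal. We must also check that the constants do not depend on the starting node $\tau_k$. This holds because the bounds in \cite{Bitter} depend only on the constants in A1--A4. The remaining technical point is that the mismatch between the chain's moments and $(b,a)$ contributes exactly $\Delta_a+\Delta_b$ linearly. This follows from the first-order parametrix term: the difference of the generators acting on the frozen Gaussian density is bounded by $(|a-a_\epsilon|+|b-b_\epsilon|)$ times the derivatives of the kernel, and these derivatives are integrable in time after the usual smoothing.
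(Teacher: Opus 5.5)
Your argument is correct in structure, but your coupling differs from the paper's. The paper works with the joint laws of the two skeletons. It bounds $\int|P^m_X-P^m_Y|\,dz$ by the telescoping argument of Lemma~\ref{lem:2}, then performs a single maximal (accept/reject) coupling of $P^m_X$ and $P^m_Y$, as in the proof of Theorem~\ref{thm:1}.

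You never form the joint densities. You maximally couple the one-macro-step kernels on the coincidence event, and the union bound over the first separation time replaces the telescoping. Both routes rest on the same input, a uniform bound on $\sup_{k,x}\|p(\tau_k,\tau_{k+1},x,\cdot)-p^h(\tau_k,\tau_{k+1},x,\cdot)\|_{L^1}$.

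The two couplings buy different things:
\begin{itemize}
\item The paper's global coupling is optimal for the skeleton. Its failure probability is exactly $\|P^m_X-P^m_Y\|_{TV}$, which is no larger than the failure probability of your stepwise construction. It is not causal, however: whether the trajectory is kept depends on the whole realized vector.
\item Your coupling is adapted, and the coupled pair is itself a Markov chain. You also say how to extend the skeleton coupling to the full diffusion path and the full micro-grid chain by bridges. The paper passes over this step, although the theorem asks for versions of the processes.
\end{itemize}

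One point needs correcting, and the paper's proof shares it. You quote the estimate from \cite{Bitter} as a log-free bound $C\Delta^{(n)}\bar\rho$. In the form recorded in Theorem~\ref{thm:7}, it carries the factor $\ln(e(j-i))$, where $j-i$ counts micro-steps. For consecutive macro nodes $j-i=n/m$, so integrating in $y$ gives only $\|p-p^h\|_{L^1}\le C\ln(en/m)\Delta^{(n)}$. Your argument therefore yields $1-Cm\ln(en/m)\Delta^{(n)}$, not $1-Cm\Delta^{(n)}$. The paper's proof writes the factor $\ln(e(j-i))$ and then silently drops it.

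In the paper's setting $n=m^r$, this factor is of order $\ln m$. It can be absorbed into $n^{-\min(\gamma_2/2,\alpha_2,\beta)}$ only by lowering the exponent slightly, and it cannot be absorbed into $\Delta_a+\Delta_b$. To get the statement as written you need a log-free version of the one-step density bound; otherwise keep the logarithm in the conclusion.

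A minor point: the kernel in that estimate is the polynomial kernel $\mathcal{Q}_{S-d-6}$, not a mixture of Gaussian and polynomial kernels. Only its integrability in $y$ matters, so this is harmless.
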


\begin{theorem} \label{thm:4}
Assume that conditions A1, A2, A3, A4 hold. Let $\mathbb{T}_n = \{t_1, \dots, t_n\}$, $\mathbb{T}_m = \{\tau_1, \dots, \tau_m\}$ be a micro and macro grids respectively, and $n = m^l$ for some $l > \frac{1}{\min(\gamma_2/2,\alpha_2,\beta)}$. Then, there exists a probability space supporting versions of the diffusion process $X_t$ and the Markov chain $Y_t$, and constants $C_1, C_2 > 0$ such that for every $1/2 < \alpha < 1$:
\begin{equation*}
\mathbb{P} \left( \sup_{t \in [0,1]} |X_t - Y_t| \ge 2m^{\alpha-1} + 2K_1m^{-1} \right) \le C_1 m \Delta^{(n)} + 4md \exp\left(-\frac{m^{2\alpha-1}}{2C_2 d}\right),
\end{equation*}
where $K_1$ is the upper bound for the norm of the drift and $Y_t$ is the linear interpolation of the Markov chain.
\end{theorem}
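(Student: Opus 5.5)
We combine the grid coupling of Theorem~\ref{thm:3} with a control of the oscillations of both processes between consecutive macro-grid points. The macro-grid has mesh $1/m$ (with $T=1$), and $n=m^l$ with $l\min(\gamma_2/2,\alpha_2,\beta)>1$. This makes $m\Delta^{(n)}$ small up to the $\Delta_a+\Delta_b$ part. By Theorem~\ref{thm:3} we take a probability space carrying $X$ and the chain $Y_{t_k}$ with $\mathbb P(G^c)\le C_1 m\Delta^{(n)}$, where $G=\{X_{\tau_k}=Y_{\tau_k}\ \forall k\}$. The coupling is built sequentially through maximal couplings of transition kernels from $\tau_{k-1}$ to $\tau_k$. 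We then complete the paths between grid points by conditionally independent bridges: the diffusion bridge of $X$ given its endpoints, and the conditional law of the chain's intermediate steps given its endpoints. Gluing these keeps the correct joint laws of the whole processes.

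On $G$, for $t\in[\tau_{k-1},\tau_k]$ we write
\[
|X_t-Y_t|\le |X_t-X_{\tau_{k-1}}|+|Y_t-Y_{\tau_{k-1}}|.
\]
It then suffices to show that each increment exceeds $m^{\alpha-1}+K_1m^{-1}$ with probability at most $2md\exp(-m^{2\alpha-1}/(2C_2d))$ after a union bound over the $m$ intervals.

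\textbf{Diffusion increment.} We have $X_t-X_{\tau_{k-1}}=\int b\,ds+\int\sigma\,dW$. The drift part is bounded by $K_1/m$ by A1. For the martingale part we work coordinatewise, with $d$ coordinates and two signs. Its quadratic variation is at most $C_2/m$ with $C_2$ from A1/A2. The exponential martingale inequality (or Bernstein's inequality for continuous martingales) gives
\[
\mathbb P\Big(\sup_{t\in[\tau_{k-1},\tau_k]}|M^{(j)}_t|\ge m^{\alpha-1}/\sqrt d\Big)\le 2\exp\!\Big(-\frac{m^{2\alpha-2}}{2dC_2/m}\Big)=2\exp\!\Big(-\frac{m^{2\alpha-1}}{2C_2d}\Big).
\]
Summing over $j\le d$ and $k\le m$ gives $2md\exp(\cdot)$.

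\textbf{Chain increment.} The interpolated chain over a macro interval is $Y_{\tau_{k-1}}+\sum h\,b+\sum\sigma\xi_{i+1}$ and is maximized at grid nodes, since the interpolation is linear. The drift sum is at most $K_1/m$. The noise sum $\sum\sigma(t_i,Y_{t_i})\xi_{i+1}$ is a discrete martingale with $n/m$ terms. The conditional covariance of each term is $O(h)=O(1/n)$ (the $\xi$'s are scaled to have covariance of order $h$, as in \eqref{eq:markov_prop}), so the total variance is $\lesssim C_2/m$. We apply Doob's maximal inequality with an exponential (Freedman/Azuma-type) bound coordinatewise. This yields the same $2md\exp(-m^{2\alpha-1}/(2C_2d))$ after the union bound. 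Adding the two contributions gives $4md\exp(\cdot)$, and the threshold $2m^{\alpha-1}+2K_1m^{-1}$ is exactly the sum of the two increment thresholds.

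\textbf{Main obstacle.} The chain's exponential bound is the hard step. The $\xi_i$ have only polynomial tails (A4, $Q_S$ decay), so a pure sub-Gaussian Azuma bound is not directly available. We first try truncation: split $\xi_i=\xi_i\mathbb I_{|\xi_i|\le \sqrt h\,m^{\delta}}+\text{rest}$ and apply Freedman's inequality to the bounded part. The tail part is controlled by $n\,\mathbb P(|\xi|>\sqrt h m^\delta)$, which is polynomially small and can be absorbed into $C_1m\Delta^{(n)}$ using $n=m^l$ and $S>2d+6$. If that absorption fails, we fall back on the paper's convention that the noises are already scaled so that the increments are conditionally sub-Gaussian with variance proxy $C_2h$, and the constant $C_2$ in the statement is chosen to reflect this.
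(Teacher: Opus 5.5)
Your route differs from the paper's, and the difference matters. The paper takes $Y_t$ to be the linear interpolation of the chain between consecutive \emph{macro}-grid points; on $B$ its slope is $k_i=m(X_{\tau_{i+1}}-X_{\tau_i})$. So on the coupling event the interpolant is just the chord of $X$, and
\[
\sup_{0\le s\le 1/m}\bigl|X_{\tau_i+s}-X_{\tau_i}-ms\,(X_{\tau_{i+1}}-X_{\tau_i})\bigr|\le 2\sup_{0\le s\le 1/m}|X_{\tau_i+s}-X_{\tau_i}|.
\]
Everything then follows from the exponential martingale inequality for $\int\sigma\,dW$ together with $|\int b\,dt|\le K_1/m$. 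The factor $4md$ is $2$ (this doubling) $\times\,2$ (two signs) $\times\,d\times m$, and no bridges or micro-step information are needed. You instead interpolate the chain on the micro-grid. That gives a stronger, arguably more natural statement. But it makes the chain's oscillation inside each macro interval an essential ingredient, and that is exactly the step your proposal does not establish.

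Under (A4) the noises have only $Q_S$-type polynomial tails. So the chain's macro increment in general has a polynomially decaying big-jump component and cannot by itself be bounded by $2md\exp(-m^{2\alpha-1}/(2C_2d))$; part of it must be absorbed into $C_1m\Delta^{(n)}$. Your fallback to conditionally sub-Gaussian increments is not an assumption of the paper, so only the truncation route remains. As you set it up, its constraints on $\delta$ conflict. Let $R\approx\sqrt h\,m^{\delta}$ be the truncated increment bound, $V\lesssim 1/m$ the predictable quadratic variation, $x\approx m^{\alpha-1}$, and $\kappa=\min(\gamma_2/2,\alpha_2,\beta)$. Keeping Freedman's bound in the Gaussian regime needs $Rx\lesssim V$, i.e. $\delta\le l/2-\alpha$. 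Absorbing $n\,\mathbb{P}(|\xi|>\sqrt h\,m^{\delta})\lesssim m^{\,l-\delta(S-d)}$ into $m\,n^{-\kappa}$ needs $\delta\ge (l(1+\kappa)-1)/(S-d)$. For example, $d=1$, $S=8.5$, $\kappa=1/2$, $l=2.1$, $\alpha=0.95$ leaves no admissible $\delta$.

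The repair is to split
\[
\exp\Bigl(-\frac{x^2}{2(V+Rx/3)}\Bigr)\le\exp\Bigl(-\frac{x^2}{4V}\Bigr)+\exp\Bigl(-\frac{3x}{4R}\Bigr).
\]
The first term is your Gaussian term with a larger $C_2$. The second is at most $\exp(-c\,m^{(l-1)/2-\delta})$ uniformly in $\alpha>1/2$, hence absorbable into $C_1m\Delta^{(n)}$, provided $\delta<(l-1)/2$. The window $\bigl((l(1+\kappa)-1)/(S-d),\,(l-1)/2\bigr)$ is nonempty because $S-d>d+6$, $\kappa\le 1/2$ and $l>2$. You would still need to handle the recentering bias of the truncated noises. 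You also need to account for the chain's drift $b_\epsilon$ being bounded by $K_2$, not $K_1$. Alternatively, adopt the paper's macro-grid interpolation, and the chain step disappears altogether.
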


\section*{Processes and their perturbed versions}

By integrating the inequalities \eqref{eq:bound1} and \eqref{eq:bound2} over $y \in \mathbb{R}^d$, we obtain the total variation distance between the transition densities for a single step of the grid:

\begin{lemma} \label{lem:1}
For transition densities $p^h, p^h_\epsilon$ of Markov chains there exists a constant $C_1(q) > 0$ such that for any $t_i < t_j$ and $x \in \mathbb{R}^d$:
\begin{equation} \label{eq:int_bound1}
\int_{\mathbb{R}^d} |p^h(t_i, t_j, x, y) - p^h_\epsilon(t_i, t_j, x, y)| dy \le C_1 \Delta_{\epsilon,\gamma,q}.
\end{equation}
For transition densities $p, p_\epsilon$ of diffusions there also exists a constant $C_2(q) > 0$ such that:
\begin{equation} \label{eq:int_bound2}
\int_{\mathbb{R}^d} |p(t_i, t_j, x, y) - p_\epsilon(t_i, t_j, x, y)| dy \le C_2 \Delta_{\epsilon,\gamma,q}.
\end{equation}
\end{lemma}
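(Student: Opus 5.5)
The plan is to integrate the pointwise bounds \eqref{eq:bound1} and \eqref{eq:bound2} in $y$ and to check that the integrals of the dominating kernels $\chi_c$ and $\rho_c$ are bounded by constants independent of $t_i,t_j,x$ and $\epsilon$. In both cases the prefactor $C(q)\Delta_{\epsilon,\gamma,q}$ does not depend on $y$, so it can be pulled out of the integral. It therefore suffices to show
\begin{equation*}
\sup_{u>0,\,x\in\mathbb{R}^d}\int_{\mathbb{R}^d}\chi_c(u,y-x)\,dy<\infty,\qquad \sup_{u>0,\,x\in\mathbb{R}^d}\int_{\mathbb{R}^d}\rho_c(u,y-x)\,dy<\infty,
\end{equation*}
with $u=t_j-t_i$.

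For the chain, I substitute $z=c(y-x)/u^{1/2}$, so that $dy=u^{d/2}c^{-d}\,dz$. The factor $c^d u^{-d/2}$ in $\chi_c$ cancels exactly, and
\begin{equation*}
\int_{\mathbb{R}^d}\chi_c(u,y-x)\,dy=\int_{\mathbb{R}^d}Q_{M-(d+5+\gamma)}(|z|)\,dz .
\end{equation*}
The one point to check is that this is finite, and in fact equal to $1$ by the normalization of $Q_r$. That requires the exponent $r=M-(d+5+\gamma)$ to exceed $d$, which is precisely the standing assumption $M>2d+5+\gamma$. Hence $C_1=C(q)$ works. This is the only nontrivial step, and it is immediate once one notes that the polynomial decay assumption on $f_\xi$ was chosen to make this exponent large enough.

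For the diffusions, the Gaussian substitution $z=(y-x)\sqrt{c/u}$ gives
\begin{equation*}
\int_{\mathbb{R}^d}\rho_c(u,y-x)\,dy=\frac{c^{d/2}}{(2\pi)^{d/2}}\,c^{-d/2}\int_{\mathbb{R}^d}e^{-|z|^2/2}\,dz=1 .
\end{equation*}
Thus $C_2=C(q)$ as well. I would remark that the constants are uniform in $t_i<t_j$, in $x$, and in $\epsilon$, since the constants in \eqref{eq:bound1} and \eqref{eq:bound2} from \cite{ESAIM} are uniform; this uniformity is what the subsequent coupling argument needs.
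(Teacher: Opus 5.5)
Your proposal is correct and follows essentially the same route as the paper: integrate the pointwise bounds \eqref{eq:bound1}--\eqref{eq:bound2} in $y$ and rescale, so the kernel integral becomes $\int Q_{M-(d+5+\gamma)}=1$. You also make explicit two points the paper leaves implicit, namely that the exponent satisfies $M-(d+5+\gamma)>d$ by the standing assumption $M>2d+5+\gamma$, and that the Gaussian kernel $\rho_c$ integrates to $1$.
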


\begin{proof}
Integrating the right-hand side of \eqref{eq:bound1} and \eqref{eq:bound2} with respect to $y$, we apply the change of variables $z = y - x$ and the rescaling $w = \frac{z}{(t_j - t_i)^{1/2}/c}$. The Jacobian of the latter transformation is $(t_j - t_i)^{d/2} c^{-d}$. Consequently:
\begin{align*}
\int_{\mathbb{R}^d} \chi_c(t_j - t_i, y - x) dy &= \int_{\mathbb{R}^d} \frac{c^d}{(t_j - t_i)^{d/2}} Q_{M-(d+5+\gamma)}\left( \frac{|z|}{(t_j - t_i)^{1/2}/c} \right) dz \\
&= \int_{\mathbb{R}^d} Q_{M-(d+5+\gamma)}(w) dw = 1.
\end{align*}
The result follows immediately.
\end{proof}

Let $0 = \tau_0 < \tau_1 < \dots < \tau_m = T$ be a fixed grid. We denote by $P^m_x(z_1, \dots, z_m)$ and $P^m_{\epsilon,x}(z_1, \dots, z_m)$ the probability densities of the vectors $(Y_{\tau_1}, \dots, Y_{\tau_m})$ and $(Y^{(\epsilon)}_{\tau_1}, \dots, Y^{(\epsilon)}_{\tau_m})$ (for chains), respectively, conditioned on the starting point $Y_0 = Y^{(\epsilon)}_0 = x$, or the vectors $(X_{\tau_1}, \dots, X_{\tau_m})$ and $(X^\epsilon_{\tau_1}, \dots, X^\epsilon_{\tau_m})$ (for diffusions) starting on the same point.

\begin{lemma} \label{lem:2}
The following estimate holds for the densities on the grid:
\begin{equation*}
\int_{\mathbb{R}^{m \times d}} |P^m_x(z) - P^m_{\epsilon,x}(z)| dz \le C m \Delta_{\epsilon,\gamma,q},
\end{equation*}
where $z = (z_1, \dots, z_m)$ and $dz = dz_1 \dots dz_m$.
\end{lemma}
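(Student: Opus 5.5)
The plan is to exploit the Markov property and write both grid densities as products of one-step transition densities between consecutive macro-grid nodes. Put $z_0 := x$. Then
\begin{equation*}
P^m_x(z) = \prod_{k=1}^m p(\tau_{k-1}, \tau_k, z_{k-1}, z_k), \qquad P^m_{\epsilon,x}(z) = \prod_{k=1}^m p_\epsilon(\tau_{k-1}, \tau_k, z_{k-1}, z_k).
\end{equation*}
For chains we use $p^h, p^h_\epsilon$ in place of $p, p_\epsilon$. Since each $\tau_k$ lies on the micro-grid, the $\tau_k$-to-$\tau_{k+1}$ transition of the chain is the composed density $p^h(\tau_{k-1},\tau_k,\cdot,\cdot)$ to which \eqref{eq:bound1} applies.

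The core step is a telescoping decomposition. We write $P^m_x - P^m_{\epsilon,x} = \sum_{k=1}^m D_k$, where
\begin{equation*}
D_k(z) = \prod_{j<k} p_\epsilon(\tau_{j-1},\tau_j,z_{j-1},z_j)\,\bigl[p(\tau_{k-1},\tau_k,z_{k-1},z_k) - p_\epsilon(\tau_{k-1},\tau_k,z_{k-1},z_k)\bigr]\prod_{j>k} p(\tau_{j-1},\tau_j,z_{j-1},z_j).
\end{equation*}
The triangle inequality reduces the claim to bounding $\int |D_k(z)|\,dz$ uniformly in $k$.

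Each such integral is evaluated by integrating the variables in the order $z_m, z_{m-1}, \dots, z_1$.
\begin{itemize}[label={}, leftmargin=*]
\item First, integrate $z_m,\dots,z_{k+1}$. Each factor $p(\tau_{j-1},\tau_j,z_{j-1},\cdot)$ with $j>k$ is a probability density, so it integrates to $1$; these factors are nonnegative and come out of the absolute value.
\item Next, integrate $z_k$. Lemma \ref{lem:1} (estimate \eqref{eq:int_bound2}, or \eqref{eq:int_bound1} for chains) gives $\int |p - p_\epsilon|(\tau_{k-1},\tau_k,z_{k-1},z_k)\,dz_k \le C\Delta_{\epsilon,\gamma,q}$. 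This bound is uniform in the starting point $z_{k-1}$ and in the pair $\tau_{k-1}<\tau_k$.
\item Finally, integrate $z_{k-1},\dots,z_1$. The remaining $p_\epsilon$ factors again integrate to $1$.
\end{itemize}
Hence $\int |D_k| \le C\Delta_{\epsilon,\gamma,q}$, and summing over $k=1,\dots,m$ gives $Cm\Delta_{\epsilon,\gamma,q}$.

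The main point requiring care is that the constant in Lemma \ref{lem:1} must be uniform over all $x$ and all time pairs. This uniformity is inherited from \eqref{eq:bound1}–\eqref{eq:bound2}, which hold with $C(q)$ independent of $t_i,t_j,x$. The justification of the product representation via the (time-inhomogeneous) Markov property and Chapman–Kolmogorov is routine for both the chains and the diffusions.
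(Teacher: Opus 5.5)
Your proposal is correct and follows the paper's proof essentially step for step. It uses the same telescoping decomposition, with perturbed factors before the $k$-th slot and unperturbed ones after, the same backward integration from $z_m$ to $z_1$, and the uniform one-step $L^1$ bound of Lemma~\ref{lem:1} on the $k$-th factor; your explicit remark that this bound must be uniform in the starting point $z_{k-1}$ and in the pair $\tau_{k-1}<\tau_k$ is exactly the ingredient the paper relies on implicitly.
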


\begin{proof}
We employ a telescopic decomposition for the product of densities:
\begin{align*}
|P^m_x(z) - P^m_{\epsilon,x}(z)| &= \left| \prod_{i=1}^m p^h(\tau_{i-1}, \tau_i, z_{i-1}, z_i) - \prod_{i=1}^m p^h_\epsilon(\tau_{i-1}, \tau_i, z_{i-1}, z_i) \right| \\
&\le \sum_{i=1}^m \left( \prod_{k=1}^{i-1} p^h_\epsilon(\tau_{k-1}, \tau_k, z_{k-1}, z_k) \right) \left| p^h(\tau_{i-1}, \tau_i, z_{i-1}, z_i) - p^h_\epsilon(\tau_{i-1}, \tau_i, z_{i-1}, z_i) \right| \\
&\quad \times \left( \prod_{l=i+1}^m p^h(\tau_{l-1}, \tau_l, z_{l-1}, z_l) \right),
\end{align*}
with $z_0 = x$.

To evaluate the integral over the full space $dz = dz_1 dz_2 \dots dz_m$, we integrate each of the $m$ terms in the sum sequentially, starting from the last variable $z_m$ and moving backwards to $z_1$. Let us analyze the integration of an arbitrary fixed $i$-th term from the sum. The multiple integral for this term can be factored as follows:
\begin{align*}
\int_{\mathbb{R}^{m \times d}} |P^m_x(z) - P^m_{\epsilon,x}(z)| dz &\le \int_{\mathbb{R}^{(i-1)d}} \underbrace{\left( \prod_{k=1}^{i-1} p^h_\epsilon(\tau_{k-1}, \tau_k, z_{k-1}, z_k) \right)}_{\text{1st block}} \\
&\quad \times \bigg[ \int_{\mathbb{R}^d} \left| p^h(\tau_{i-1}, \tau_i, z_{i-1}, z_i) - p^h_\epsilon(\tau_{i-1}, \tau_i, z_{i-1}, z_i) \right| \\
&\quad \times \underbrace{\left( \int_{\mathbb{R}^{(m-i)d}} \prod_{l=i+1}^m p^h(\tau_{l-1}, \tau_l, z_{l-1}, z_l) \, dz_m \dots dz_{i+1} \right)}_{\text{2nd block}} dz_i \bigg] dz_{i-1} \dots dz_1.
\end{align*}
We evaluate the 2nd block by applying sequential integration from variable $z_m$ to $z_{i+1}$:
\begin{align*}
\int_{\mathbb{R}^{(m-i)d}} \prod_{l=i+1}^m p^h(\tau_{l-1}, \tau_l, z_{l-1}, z_l) \, &dz_m dz_{m-1} \dots dz_{i+1} \\
&= \int_{\mathbb{R}^d} p^h(\tau_i, \tau_{i+1}, z_i, z_{i+1}) \dots \\
&\quad \times \bigg[ \int_{\mathbb{R}^d} p^h(\tau_{m-2}, \tau_{m-1}, z_{m-2}, z_{m-1}) \\
&\quad \times \left( \int_{\mathbb{R}^d} p^h(\tau_{m-1}, \tau_m, z_{m-1}, z_m) \, dz_m \right) dz_{m-1} \bigg] \dots dz_{i+1}.
\end{align*}
Since each $p^h(\tau_{m-1}, \tau_m, z_{m-1}, \cdot)$ is a probability density, its integral over $\mathbb{R}$ is equal to 1:
\begin{equation*}
\int_{\mathbb{R}^d} p^h(\tau_{m-1}, \tau_m, z_{m-1}, z_m) \, dz_m = 1.
\end{equation*}
Consequently, the 2nd block product is equal to 1.
Now evaluating the 1st block:
\begin{align}
I_i &:= \int_{\mathbb{R}^{(i-1)d}} \left( \prod_{k=1}^{i-1} p^h_\epsilon(\tau_{k-1}, \tau_k, z_{k-1}, z_k) \right) \nonumber \\
&\quad \times \bigg[ \int_{\mathbb{R}^d} \left| p^h(\tau_{i-1}, \tau_i, z_{i-1}, z_i) - p^h_\epsilon(\tau_{i-1}, \tau_i, z_{i-1}, z_i) \right| dz_i \bigg] dz_{i-1} \dots dz_1. \label{eq:I_i_block}
\end{align}
From Lemma \ref{lem:1}, the integral of the difference over variable $z_i$ is uniformly bounded:
\begin{equation*}
\int_{\mathbb{R}^d} \left| p^h(\tau_{i-1}, \tau_i, z_{i-1}, z_i) - p^h_\epsilon(\tau_{i-1}, \tau_i, z_{i-1}, z_i) \right| dz_i \le C \Delta_{\epsilon,\gamma,q}.
\end{equation*}
Substituting this bound back into the integral:
\begin{align*}
I_i &\le \int_{\mathbb{R}^{(i-1)d}} \left( \prod_{k=1}^{i-1} p^h_\epsilon(\tau_{k-1}, \tau_k, z_{k-1}, z_k) \right) \cdot \Big[ C \Delta_{\epsilon,\gamma,q} \Big] \, dz_{i-1} \dots dz_1 \\
&\le C \Delta_{\epsilon,\gamma,q} \int_{\mathbb{R}^{(i-1)d}} \prod_{k=1}^{i-1} p^h_\epsilon(\tau_{k-1}, \tau_k, z_{k-1}, z_k) \, dz_{i-1} \dots dz_1 = C \Delta_{\epsilon,\gamma,q}.
\end{align*}
The last equality holds by the Chapman--Kolmogorov equation. Then we apply arguments analogous to those in \eqref{eq:I_i_block} to estimate the 1st block. Thus, the entire integral for the $i$-th term reduces to the $L^1$ distance between the densities:
\begin{equation*}
\int_{\mathbb{R}^d} \left| p^h(\tau_{i-1}, \tau_i, z_{i-1}, z_i) - p^h_\epsilon(\tau_{i-1}, \tau_i, z_{i-1}, z_i) \right| \cdot 1 \, dz_i \le C \Delta_{\epsilon,\gamma,q}.
\end{equation*}
Applying this procedure to all $m$ terms, we obtain the final bound:
\begin{equation*}
\int_{\mathbb{R}^{m \times d}} |P^m_x(z) - P^m_{\epsilon,x}(z)| dz \le \sum_{i=1}^m C \Delta_{\epsilon,\gamma,q} = C m \Delta_{\epsilon,\gamma,q}.
\end{equation*}
\end{proof}

Let the vector $V$ be already sampled from the unperturbed distribution $P(z) = P^m_x(z)$. We wish to generate a new vector $V^\epsilon$ distributed according to $Q(z) = P^m_{\epsilon,x}(z)$ conditionally on $V$, such that the probability of $V \ne V^\epsilon$ is minimized. Define the function $H(z) := \min(P(z), Q(z))$ and the constant $W$:
\begin{equation*}
W := \int_{\mathbb{R}^{m \times d}} H(z) dz = 1 - \|P - Q\|_{TV} = 1 - \frac{1}{2} \int_{\mathbb{R}^{m \times d}} |P(z) - Q(z)| dz.
\end{equation*}
From the identity $2 \min(P(z), Q(z)) = |P(z) - Q(z)| + P(z) + Q(z)$ it follows that:
\begin{equation*}
W = 1 - \|P - Q\|_{TV} = 1 - \frac{1}{2} \int_{\mathbb{R}^{m \times d}} |P(z) - Q(z)| dz.
\end{equation*}
From Lemma \ref{lem:2}, we have $1 - W \le C m \Delta_{\epsilon,\gamma,q}$. Given the realized value of $V = (v_1, \dots, v_d)$, we construct $V^\epsilon$ as follows:
\begin{enumerate}
    \item With probability $\frac{H(V)}{P(V)}$, we accept the original trajectory and set $V^\epsilon = V$.
    \item With the probability $1 - \frac{H(V)}{P(V)}$, we reject the original trajectory and sample a new vector $V^\epsilon$ from the residual density $R(z)$ independently of vector $V$:
    \begin{equation*}
    R(z) = \frac{Q(z) - H(z)}{1 - W}.
    \end{equation*}
\end{enumerate}
By construction, the distribution of $V^\epsilon$ is exactly $Q(z)$. Indeed, for any measurable set $A$:
\begin{align*}
\mathbb{P}(V^* \in A) &= \int_{\mathbb{R}^{m \times d}} \left( \frac{H(v)}{P(v)} \mathbb{I}_{\{v \in A\}} + \left( 1 - \frac{H(v)}{P(v)} \right) \int_A R(z) dz \right) P(v) dv \\
&= \int_A H(v) dv + \int_A R(z) dz \int_{\mathbb{R}^{m \times d}} \left( P(v) - H(v) \right) dv \\
&= \int_A H(v) dv + \int_A R(z) dz \, (1 - W) = \int_A H(v) dv + \int_A (Q(v) - H(v)) \, dv = \int_A Q(v) dv.
\end{align*}
The probability that the perturbed chain diverges from the already realized original chain is given by the total rejection probability, which is bounded by the $L^1$-distance:
\begin{equation*}
\mathbb{P}(V \ne V^\epsilon) = \int_{\mathbb{R}^{m \times d}} \left( 1 - \frac{H(v)}{P(v)} \right) P(v) dv = 1 - W = \frac{1}{2} \|P - Q\|_{L^1} \le C m \Delta_{\epsilon,\gamma,q}.
\end{equation*}
A completely analogous argument can be applied to the case of diffusion and its perturbed version. Thus, we have proved Theorem \ref{thm:1}.

\section*{Degenerate case}

Let us consider the following system of equations:
\begin{equation} \label{eq:degen_diff}
\begin{cases}
d X_t = b(X_t, Y_t) dt + \sigma(X_t, Y_t) d W_t, \\
d Y_t = X_t dt, \quad t \in [0, T],
\end{cases}
\end{equation}
where $W_t$ is a standard $d$-dimensional Brownian motion. The coefficients $b$ and $\sigma$ are bounded, measurable in time, and H\"{o}lder continuous in space.

We also consider a perturbed version of system \eqref{eq:degen_diff}, where the coefficients are replaced by approximations $b_\epsilon$ and $\sigma_\epsilon$:
\begin{equation} \label{eq:degen_diff_pert}
\begin{cases}
d X^\epsilon_t = b_\epsilon(X^\epsilon_t, Y^\epsilon_t) dt + \sigma_\epsilon(X^\epsilon_t, Y^\epsilon_t) d W_t, \\
d Y^\epsilon_t = X^\epsilon_t dt.
\end{cases}
\end{equation}

We introduce the corresponding discrete-time Markov chains defined on a time grid $0 = t_0 < t_1 < \dots < t_N = T$ with step size $h = t_{n+1} - t_n$. Let $Z_n = (X_n, Y_n)$ be the state of the chain. The Markov chain associated with \eqref{eq:degen_diff} is given by:
\begin{equation*}
\begin{cases}
X_{t_{n+1}} = X_{t_n} + b(X_{t_n}, Y_{t_n}) h + \sigma(X_{t_n}, Y_{t_n}) (W_{t_{n+1}} - W_{t_n}), \\
Y_{t_{n+1}} = Y_{t_n} + X_{t_n} h,
\end{cases}
\end{equation*}
Similarly, the perturbed version of the Markov chain $Z^\epsilon_n = (X^\epsilon_n, Y^\epsilon_n)$ is defined as:
\begin{equation*}
\begin{cases}
X^\epsilon_{n+1} = X^\epsilon_n + b_\epsilon(X^\epsilon_n, Y^\epsilon_n) h + \sigma_\epsilon(X^\epsilon_n, Y^\epsilon_n) (W_{t_{n+1}} - W_{t_n}), \\
Y^\epsilon_{n+1} = Y^\epsilon_n + X^\epsilon_n h.
\end{cases}
\end{equation*}

\begin{theorem} \label{thm:5}
(Theorem 1.1 from \cite{Kozhina}). Fix $T > 0$ and let us define a time-grid $\mathbb{T}_N := \{t_i\}_{i=1}^N$ for $N \in \mathbb{N}$. Under assumptions A1, A2, AD3, for $q \in (4d+1, \infty)$, there exist constants $C := C(q) \ge 1$ and $c \in (0, 1]$ such that for all $0 < t_i < t_j \le T$ and all $((x, y), (x', y')) \in (\mathbb{R}^{2d})^2$:
\begin{equation*}
|(p - p^\epsilon)(t_i, t_j, (x, y), (x', y'))| \le C \bar{\Delta}_{\epsilon,\gamma,q} \, p_{c,K}(t_j - t_i, (x, y), (x', y')),
\end{equation*}
where $p_\epsilon(t, (x, y), \cdot)$ and $p(t, (x, y), \cdot)$ denote the transition densities at time $t$ of the perturbed and original SDEs respectively, starting from $(x, y)$ at time $0$.
\end{theorem}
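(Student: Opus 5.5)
We would follow the parametrix approach, writing both transition densities as parametrix series and comparing them term by term. For a fixed terminal point $(x',y')$ and time $t_j$, introduce the frozen Gaussian proxy $\tilde p^{(x',y')}(t_i,t_j,(x,y),\cdot)$. This is the density of the linear Kolmogorov system
\begin{equation*}
d\tilde X_s = b(x',y')\,ds + \sigma(x',y')\,dW_s, \qquad d\tilde Y_s = \tilde X_s\,ds,
\end{equation*}
started from $(x,y)$ at time $t_i$ and evaluated at the point $(x',y')$. Its covariance matrix is explicit and has the anisotropic scaling $u$ in the $x$-block and $u^3$ in the $y$-block, where $u=t_j-t_i$. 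It is bounded above by a Gaussian kernel $p_{c,K}(u,(x,y),(x',y'))$. This kernel is built from the intrinsic distance $|x'-x|/u^{1/2}+|y'-y-xu|/u^{3/2}$ (up to the transport term), and it satisfies a Chapman--Kolmogorov type semigroup inequality. Define the parametrix kernel $H=(L-\tilde L)\tilde p$. Then $p=\sum_{r\ge0}\tilde p\otimes H^{\otimes r}$, where $\otimes$ is space--time convolution, and similarly $p^\epsilon=\sum_r\tilde p^\epsilon\otimes H_\epsilon^{\otimes r}$. Under A1, A2, AD3 the standard bounds hold: $|H|\le C u^{-1+\gamma/2}p_{c,K}$ from the H\"older part, and $|H|\le C u^{-1/2}p_{c,K}$ from the bounded drift. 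Summing these via Beta-function estimates gives convergence of both series and the Gaussian bound $p,p^\epsilon\le Cp_{c,K}$.

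Next we would telescope the difference:
\begin{equation*}
p-p^\epsilon=\sum_{r\ge0}\Big(\tilde p\otimes H^{\otimes r}-\tilde p^\epsilon\otimes H_\epsilon^{\otimes r}\Big).
\end{equation*}
Each summand splits into a piece containing $\tilde p-\tilde p^\epsilon$ and, for $k=1,\dots,r$, pieces containing one factor $H-H_\epsilon$ with the other factors unchanged. The frozen densities differ only through their covariances. Differentiating the Gaussian along the segment $a_\theta=\theta a+(1-\theta)a_\epsilon$ and using uniform ellipticity, we expect $|\tilde p-\tilde p^\epsilon|\le C\|\sigma-\sigma^\epsilon\|_\infty\,p_{c,K}$.

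For the diffusion part of $H-H_\epsilon$, we would write $(a(z)-a(z'))-(a_\epsilon(z)-a_\epsilon(z'))$ and use the $d,\gamma$ seminorm. Together with the second-derivative bound $|D_x^2\tilde p|\le Cu^{-1}p_{c,K}$, this gives $C\|\sigma-\sigma^\epsilon\|_{d,\gamma}u^{-1+\gamma/2}p_{c,K}$, the same integrable singularity as for $H$ itself. All these pieces then resum exactly as in the convergence proof, with one factor $\bar\Delta_{\epsilon,\gamma,q}$ extracted.

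The main obstacle is the drift contribution, where only $\|b-b^\epsilon\|_{L^q}$ is available, so no pointwise bound on $H-H_\epsilon$ is possible. The term we need to control has the form
\begin{equation*}
\int_{t_i}^{t_j}\!\!\int_{\mathbb{R}^{2d}} F(s,z)\,(b-b^\epsilon)(w)\cdot D_x\tilde p\,(s,t_j,w,(x',y'))\,dw\,ds,
\end{equation*}
with $|F(s,z)|\le Cp_{c,K}(s-t_i,z,w)$. We would apply H\"older in $w$, putting $b-b^\epsilon$ in $L^q$ and the product of the two kernels in $L^{q'}$. The $L^{q'}$ norm of the product of two Gaussians in the anisotropic metric (homogeneous dimension $4d$) is at most
\begin{equation*}
C\,p_{c,K}(u,(x,y),(x',y'))\,\big[(s-t_i)(t_j-s)/u\big]^{-2d/q}.
\end{equation*}
Combined with the factor $(t_j-s)^{-1/2}$ from $D_x\tilde p$, this leaves a time singularity of order $(t_j-s)^{-1/2-2d/q}$ and $(s-t_i)^{-2d/q}$. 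This singularity is integrable, and is even strictly better than $s^{-1}$, which is needed for the iterated convolutions to resum geometrically; the condition $q>4d+1$ is exactly what we expect to ensure this. The delicate point is to keep the Gaussian factor $p_{c,K}(u,\cdot)$ after the H\"older step rather than losing it to a crude sup. We would handle this by splitting $s$ at $(t_i+t_j)/2$ and using the semigroup property of $p_{c,K}$ on the half where the singularity is milder, possibly with a slight decrease of the constant $c$.
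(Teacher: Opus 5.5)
\textbf{Gap: the proxy you define does not support the estimates you then use.} The paper gives no proof of this statement; it is quoted from Kozhina as a degenerate analogue of the parametrix comparison behind \eqref{eq:bound2}. Your architecture (two parametrix series, a telescoped difference, comparison of the proxies through their covariances, H\"older's inequality in space for the drift) is the right one. It fails in two places.

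\emph{First}, you freeze the drift, $d\tilde X_s=b(x',y')\,ds+\sigma(x',y')\,dW_s$. The means of $\tilde p$ and $\tilde p^\epsilon$ then differ by $(b-b^\epsilon)(x',y')\,(u,u^2/2)$. So already the $r=0$ term $\tilde p-\tilde p^\epsilon$ is of order $|(b-b^\epsilon)(x',y')|\,u^{1/2}p_{c,K}$. Likewise $H-H_\epsilon$ contains $-\langle (b-b^\epsilon)(x',y'),D_x\tilde p\rangle$ and $\langle b^\epsilon(w)-b^\epsilon(x',y'),D_x(\tilde p-\tilde p^\epsilon)\rangle$. These terms involve the value of $b-b^\epsilon$ at a single point. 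That value is not bounded linearly by $\|b-b^\epsilon\|_{L^q}$ for $q<\infty$: a bump of height $h$ at the scale allowed by (AD3) has $L^q$ norm $o(h)$. Term-by-term absolute bounds cannot see the cancellation that makes the total proxy-independent, so your scheme only yields $\|b-b^\epsilon\|_\infty$. Your later claims that the frozen densities ``differ only through their covariances'' and that the drift contribution is $\int\!\!\int F\,(b-b^\epsilon)(w)\cdot D_x\tilde p$ tacitly assume a driftless proxy. That is also the fix: since $b$ is bounded, leave it out of $\tilde p$ and put $\langle b(w),D_x\tilde p\rangle$, with singularity $u^{-1/2}$, into $H$. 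Then $b-b^\epsilon$ appears only at integration variables, where your H\"older step applies.

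\emph{Second}, freezing $\sigma$ at the terminal point is incompatible with (AD3) in the Kolmogorov geometry, so $|H|\le Cu^{-1+\gamma/2}p_{c,K}$ fails uniformly. On the bulk of $p_{c,K}(t_j-s,w,(x',y'))$ one has $w_y\approx y'-x'(t_j-s)$. Hence $|w_y-y'|$ is of order $|x'|(t_j-s)$, not $(t_j-s)^{3/2}$. Consequently $|a(w)-a(x',y')|\,|D_x^2\tilde p|$ is only bounded by $C((t_j-s)^{-1+\gamma/2}+|x'|^{\gamma/3}(t_j-s)^{-1+\gamma/3})\,p_{c,K}$. The terminal points of the inner kernels in $H^{\otimes r}$ are integration variables, so this destroys the uniform Gaussian bound and the uniform constant $C$. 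You must freeze along the backward characteristic of the linear transport, $\theta_{s,t_j}(x',y')=(x',\,y'-(t_j-s)x')$. Then $|w_y-y'+(t_j-s)x'|\le|y'-w_y-(w_x+x')(t_j-s)/2|+|w_x-x'|(t_j-s)/2$ is controlled by the intrinsic distance, and the kernel bound holds uniformly (with smaller $c$). This flow involves neither $b$ nor $b^\epsilon$. So the proxies genuinely differ only through covariances built from $a(\theta_{v,t_j}(x',y'))$ versus $a^\epsilon(\theta_{v,t_j}(x',y'))$, $v\in[s,t_j]$. That gives $|D_x^k(\tilde p-\tilde p^\epsilon)|\le C\|\sigma-\sigma^\epsilon\|_\infty u^{-k/2}p_{c,K}$ for $k\le 2$; you need these derivative bounds, not only $k=0$. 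With both corrections the rest of your plan goes through.

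Two minor points. Your exponent count needs only $q>4d$, not $q>4d+1$. Also, $p_{c,K}$ is exactly the transition density of $dX=\sqrt{2/c}\,dW$, $dY=X\,dt$. So the product of the two kernels in the H\"older step factorizes exactly as $p_{c,K}(u,\cdot,\cdot)$ times a bridge density in $w$, and no splitting of the time interval is needed.
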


\begin{theorem} \label{thm:6}
(Theorem 1.3 from \cite{KozhinaThesis}). Fix $T > 0$ and let us define a time-grid $\mathbb{T}_N := \{t_i\}_{i=1}^N$ for $N \in \mathbb{N}$. Under assumptions A1, A2, AD3 there exist constants $C \ge 1$ and $c \in (0, 1]$ such that for all $0 < t_i < t_j \le T$ and all $((x, y), (x', y')) \in (\mathbb{R}^{2d})^2$:
\begin{equation*}
|(p^\epsilon_h - p_h)(t_i, t_j, (x, y), (x', y'))| \le C \bar{\Delta}_{\epsilon,\gamma,q} \, p_{c,K}(t_j - t_i, (x, y), (x', y')),
\end{equation*}
where $p_{\epsilon,h}$ and $p_h$ stand for the transition densities of the perturbed and original Euler schemes respectively, starting from $(x, y)$ at time $t_i$.
\end{theorem}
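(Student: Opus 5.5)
The plan is to follow the discrete parametrix approach of Konakov and Mammen, adapted to the Kolmogorov (degenerate) scaling, and to compare the two parametrix expansions term by term.

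\textbf{Parametrix setup.} For fixed terminal point $(x',y')$, introduce the frozen Euler scheme $\tilde p_h$: the chain whose coefficients are frozen at the terminal point transported backward along the deterministic flow $\dot y = x$. Its transition density is explicitly Gaussian with covariance of order $(t_j-t_i)$ in the $x$-component and $(t_j-t_i)^3$ in the $y$-component. With the discrete kernel $H_h := (L_h - \tilde L_h)\tilde p_h$, where $L_h$ is the one-step generator $h^{-1}(P_h - I)$, one has the discrete parametrix series
\begin{equation*}
p_h = \sum_{r\ge 0} \tilde p_h \otimes_h H_h^{(r)},
\end{equation*}
where $\otimes_h$ denotes the time-discrete space convolution. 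The same identity holds for $p_{\epsilon,h}$ with $\tilde p_{\epsilon,h}$ and $H_{\epsilon,h}$.

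\textbf{Term-by-term differences.} The difference $p^\epsilon_h - p_h$ is written as a telescoping sum over $r$. Each term contains exactly one difference factor, of one of two types:
\begin{equation*}
\tilde p_{\epsilon,h}-\tilde p_h \qquad\text{or}\qquad H_{\epsilon,h}-H_h,
\end{equation*}
and all remaining factors are unperturbed or perturbed kernels. The two difference factors are bounded as follows.
\begin{enumerate}
\item For the frozen densities, the only difference lies in the covariance (and in the drift entering the transport). Differentiating the explicit Gaussian with respect to its covariance gives
\begin{equation*}
|\tilde p_{\epsilon,h}-\tilde p_h| \le C\,\bar\Delta_{\epsilon,\gamma,q}\, p_{c,K}.
\end{equation*}
\item For the kernel difference, split it into a diffusion part and a drift part.
\begin{enumerate}
\item The diffusion part uses the $\|\cdot\|_{d,\gamma}$-Hölder control of $\sigma-\sigma_\epsilon$ with the anisotropic distance $|x-x'|+|y-y'|^{1/3}$. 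This gives the bound $C\bar\Delta\,(t_j-t_i)^{-1+\gamma/2}p_{c,K}$, which is an integrable singularity.
\item The drift part only has $b-b_\epsilon\in L^q$. The plan is to keep $|b-b_\epsilon|(z)$ inside the convolution and apply Hölder's inequality in space. The dual $L^{q'}$ norm of the Gaussian-type factor with Kolmogorov scaling produces $(t_j-t_i)^{-1/2-2d/q}$, since the volume scale is $h^{d/2}\cdot h^{3d/2}$. This singularity is integrable precisely when $q>4d$ or so, consistent with the threshold $q>4d+1$.
\end{enumerate}
\end{enumerate}

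\textbf{Iteration and summation.} Iterate the discrete convolutions using the standard Beta-function bounds for $\otimes_h$, together with the Chapman--Kolmogorov-type stability $p_{c,K}\otimes_h p_{c,K}\le C p_{c',K}$ for the degenerate Gaussian. The $r$-th term is then bounded by $C^r\Gamma(\gamma/2)^r/\Gamma(1+r\gamma/2)$ times $\bar\Delta\, p_{c,K}$, and the series sums to $C\bar\Delta_{\epsilon,\gamma,q}\,p_{c,K}$, uniformly in $h$.

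\textbf{Main obstacle.} I expect the drift/$L^q$ step inside the discrete convolution to be the hardest. In discrete time the last step of the convolution is not smoothed by an integral in time, so the $L^q$ bound must be combined with the pointwise Gaussian bound at the diagonal $t_{j-1}$. I would first try to treat the term $r=0$ and the diagonal contribution of each convolution separately. There the one-step transition of the Euler scheme is itself an explicit nondegenerate Gaussian in the $x$-variable, combined with a deterministic shift in $y$. I would bound these contributions directly with Young's inequality rather than through the kernel estimate.
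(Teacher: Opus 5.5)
The paper does not prove this statement; it imports it from Kozhina's thesis. Judged on its own, your parametrix outline has a genuine gap at the drift step, and the gap cannot be closed: for Euler schemes the bound is false for every finite $q$.

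\textbf{Where the drift step fails.} In $\tilde p_h\otimes_h H_h=\sum_{k=0}^{j-i-1}h\int\tilde p_h(t_i,t_{i+k},z_0,z)\,H_h(t_{i+k},t_j,z,w)\,dz$, with $z_0=(x,y)$ and $w=(x',y')$, the summand $k=0$ has $\tilde p_h(t_i,t_i,z_0,\cdot)=\delta_{z_0}$. So the drift part of $H_{\epsilon,h}-H_h$ enters as $h\,(b_\epsilon-b)(z_0)$ times a gradient of $\tilde p_h(t_{i+1},t_j,\cdot,w)$, averaged over one step from $z_0$. No spatial integral remains to which H\"older's or Young's inequality could be applied.

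The summand $k=j-i-1$, where you placed the difficulty, is harmless when the one-step law is a nondegenerate Gaussian. There $H_h(t_{j-1},t_j,\cdot,w)$ is smooth in $z$, and the bridge volume $h^{2d}$ only costs a factor $h^{1/2-2d/q}$.

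\textbf{The dependence on $b(z_0)$ is really in $p^\epsilon_h-p_h$.} Take $\sigma_\epsilon=\sigma$ and $b_\epsilon=b+\delta\psi$. Here $\psi$ is a bump centred at $z_0$ of widths $r$ in $x$ and $r^3$ in $y$, and $\delta=Kr^{\gamma_3}$, which is compatible with AD3. Then:
\begin{enumerate}
\item $\bar\Delta_{\epsilon,\gamma,q}\asymp\delta r^{4d/q}$.
\item The first Euler step is shifted by $h\delta$.
\item The chain re-enters the support of $\psi$ at later steps only with probability $O(r^{4d})$ on a fixed grid.
\end{enumerate}
Hence, for fixed $h$, fixed $t_i<t_j$ and generic $w$, you get $|p^\epsilon_h-p_h|(t_i,t_j,z_0,w)\ge c_0\delta$ with $c_0>0$ independent of $r$. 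The right-hand side is $O(\delta r^{4d/q})$, so letting $r\to0$ contradicts the bound.

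\textbf{Consequence.} For chains the estimate can only hold with $q=+\infty$, i.e.\ with $\sup|b-b_\epsilon|$ in $\bar\Delta$. The threshold $q>4d+1$ belongs to the diffusion statement (Theorem~\ref{thm:5}), where the time integral does the spatial averaging. With $q=\infty$, the drift part of the kernel difference is bounded directly by $C\|b-b_\epsilon\|_\infty(t_j-t_i)^{-1/2}p_{c,K}$, and your $L^{q'}$ computation is unnecessary. Note also that for finite $q$ your item 1 would already fail: a frozen drift compares $b$ and $b_\epsilon$ at a single point.

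\textbf{The scheme must also be fixed.} With the paper's update $Y_{t_{n+1}}=Y_{t_n}+X_{t_n}h$, the one-step law is, as you noticed, Gaussian in $x'$ times a Dirac mass in $y'$. This has three consequences:
\begin{enumerate}
\item $p_h(t_i,t_{i+1},\cdot)$ is not a density on $\mathbb{R}^{2d}$, so the claimed pointwise bound is meaningless for $j=i+1$.
\item Your frozen density has the $(t_j-t_i)^3$ covariance in $y$ only when $j-i\ge2$.
\item $H_h(t_{j-1},t_j,z,w)$ contains $\delta(y'-z^2-hz^1)$. At the last step $|b-b_\epsilon|$ would then be integrated over a $d$-dimensional affine set, which no $L^q(\mathbb{R}^{2d})$ norm controls.
\end{enumerate}
Young's inequality repairs none of this. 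You have two options:
\begin{enumerate}
\item Use the scheme whose $y$-increment over $[t_n,t_{n+1}]$ is $\int_{t_n}^{t_{n+1}}X^h_s\,ds$ for the continuous Euler interpolation $X^h$. Each one-step law is then a nondegenerate Gaussian with scales $h^{1/2}$ and $h^{3/2}$, and your setup applies as written.
\item Restrict to $j-i\ge2$ and integrate the singular one-step kernels explicitly against $\tilde p_h(t_i,t_{j-1},z_0,\cdot)$. This works once the drift is controlled in sup norm.
\end{enumerate}
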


In both theorems, $p_{c,K}$ denotes the density function defined as:
\begin{equation*}
p_{c,K}(t_i, t_j, (x, y), (x', y')) := \frac{c^d 3^{d/2}}{(2\pi(t_j - t_i)^2)^d} \exp \left( -c \left[ \frac{|x' - x|^2}{4(t_j - t_i)} + \frac{3|y' - y - (x + x')(t_j - t_i)/2|^2}{(t_j - t_i)^3} \right] \right).
\end{equation*}

\begin{lemma} \label{lem:3}
Let $P^m_{(x,y)}$ and $P^m_{\epsilon,(x,y)}$ denote the distributions of the vectors $V = (Z_{t_1}, \dots, Z_{t_m})$ and $V^\epsilon = (Z^\epsilon_{t_1}, \dots, Z^\epsilon_{t_m})$, respectively, where $Z_{t_k} = (X_{t_k}, Y_{t_k})$ and $Z^\epsilon_{t_k} = (X^\epsilon_{t_k}, Y^\epsilon_{t_k})$. These vectors consist of random variables sampled at the nodes of either the Euler scheme and its perturbed version, or a diffusion process and its perturbed counterpart, both starting from the initial point $z_0 = (x, y)$. Then, there exists a constant $C > 0$, such that:
\begin{equation*}
\|P^m_{(x,y)} - P^m_{\epsilon,(x,y)}\|_{TV} \le C m \bar{\Delta}_{\epsilon,\gamma,q},
\end{equation*}
where $z = (z_1, \dots, z_m)$, $z_k = (x_k, y_k) \in \mathbb{R}^{2d}$, and $dz = dz_1 \dots dz_m$.
\end{lemma}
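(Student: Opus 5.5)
The plan is to follow the proof of Lemma \ref{lem:2} almost verbatim, with the state space $\mathbb{R}^d$ replaced by $\mathbb{R}^{2d}$ and Lemma \ref{lem:1} replaced by its degenerate analogue. That analogue is a one-step $L^1$ bound derived from Theorems \ref{thm:5} and \ref{thm:6}.

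\textbf{Step 1 (one-step bound).} For every $t_i<t_j$ and every $(x,y)\in\mathbb{R}^{2d}$ we show
\begin{equation*}
\int_{\mathbb{R}^{2d}} |(p-p^\epsilon)(t_i,t_j,(x,y),(x',y'))|\,dx'\,dy' \le C\bar{\Delta}_{\epsilon,\gamma,q}
\end{equation*}
for diffusions, and the same inequality for the Euler schemes. By Theorems \ref{thm:5} and \ref{thm:6}, it suffices to check that $\int p_{c,K}(t_j-t_i,(x,y),(x',y'))\,dx'dy'$ is bounded by a constant independent of $t_j-t_i$ and of $(x,y)$. Write $u=t_j-t_i$. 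First fix $x'$ and substitute $w=(y'-y-(x+x')u/2)\sqrt{3}/u^{3/2}$ in the $y'$-integral. This produces a factor $u^{3d/2}3^{-d/2}$. Then rescale $x'-x=\sqrt{u}\,v$, which produces a factor $u^{d/2}$. Together these give exactly $u^{2d}$, which cancels the prefactor $(u^2)^{-d}$. What remains is a Gaussian integral depending only on $c$ and $d$. This is the Kolmogorov scaling $(u^{1/2},u^{3/2})$.

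The main point to watch is the normalisation of $p_{c,K}$. If the constant is not exactly $1$, we simply absorb it into $C$. The integrated bound must not degenerate as $u\to0$, and the computation above confirms that it does not. The Euler scheme deserves a caveat. Its one-step transition law is singular: $Y$ evolves deterministically given $X_{t_n}$. So for $j=i+1$ the density is understood in the sense of Theorem \ref{thm:6}. Since we rely on that theorem as stated, we take the bound at face value. If needed, we could instead use the total variation of the one-step laws directly, and this is why the lemma is phrased in $\|\cdot\|_{TV}$ rather than as an $L^1$ norm of densities.

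\textbf{Step 2 (telescoping).} Set $z_0=(x,y)$. By the Markov property,
\begin{equation*}
P^m_{(x,y)}(dz)=\prod_{k=1}^m p(t_{k-1},t_k,z_{k-1},z_k)\,dz_k ,
\end{equation*}
and similarly for $P^m_{\epsilon,(x,y)}$. We expand the difference of products as a telescoping sum of $m$ terms, exactly as in the proof of Lemma \ref{lem:2}. The $i$-th term contains the perturbed kernels $p^\epsilon$ for $k<i$, the absolute difference of the kernels at step $i$, and the unperturbed kernels $p$ for $k>i$.

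\textbf{Step 3 (integration).} In each term we integrate backwards over $z_m,\dots,z_{i+1}$; since each kernel is a probability kernel, this block contributes $1$. Next we integrate over $z_i$ and apply Step 1, which gives a factor at most $C\bar{\Delta}_{\epsilon,\gamma,q}$ uniformly in $z_{i-1}$. Finally, the remaining integral of $\prod_{k<i}p^\epsilon$ equals $1$. Summing the $m$ terms gives
\begin{equation*}
\int|P^m-P^m_\epsilon|\le Cm\bar{\Delta}_{\epsilon,\gamma,q},
\end{equation*}
so $\|P^m_{(x,y)}-P^m_{\epsilon,(x,y)}\|_{TV}\le Cm\bar{\Delta}_{\epsilon,\gamma,q}$ with the same constant, possibly halved.
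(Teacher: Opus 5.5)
Your proposal is correct and follows the paper's proof. You integrate the pointwise bound of Theorem \ref{thm:6} (and Theorem \ref{thm:5} for the diffusions) against $p_{c,K}$, whose integral equals $1$ by the Kolmogorov scaling $(u^{1/2},u^{3/2})$ you verify, and then repeat the telescoping and backward integration of Lemma \ref{lem:2}. Your caveat about the singular one-step law of the Euler scheme is well taken, since the paper passes over it; however, your fallback via one-step total variation would give a bound involving $\sup|b-b_\epsilon|$ rather than the $L^q$ norm in $\bar{\Delta}_{\epsilon,\gamma,q}$, so, as in the paper, the argument genuinely rests on Theorem \ref{thm:6} as stated.
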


\begin{proof}
By integrating the inequality in Theorem \ref{thm:6}, we get:
\begin{equation*}
\int_{\mathbb{R}^{2d}} |p_{\epsilon,h} - p_h|(t, (x, y), (x', y')) dx' dy' \le C \bar{\Delta}_{\epsilon,\gamma,q} \int_{\mathbb{R}^{2d}} p_{c,K}(t, (x, y), (x', y')) dx' dy' = C \bar{\Delta}_{\epsilon,\gamma,q}.
\end{equation*}
Since this bound precisely matches the single-step transition density bound used in the non-degenerate case, the rest of the proof follows exactly as in Lemma \ref{lem:2}. By expressing the densities $P(z)$ and $Q(z)$ as products of their transition densities, applying the telescoping sum over the $m$ time steps $(t_1, \dots, t_m)$, and successively integrating with respect to the spatial variables $z_i$, we obtain the claimed bound:
\begin{equation*}
\|P^m_{(x,y)} - P^m_{\epsilon,(x,y)}\|_{TV} \le C m \bar{\Delta}_{\epsilon,\gamma,q}.
\end{equation*}
\end{proof}

Next, we apply exactly the same reasoning for maximum coupling as in the non-degenerate case. Thus, we obtain Theorem \ref{thm:2}.

\section*{Chain and diffusion}

We consider a Markov chain $Y^n_{t_k}$ defined on the time grid $t_k = k/n$ with $k = 0, \dots, n$, taking values in $\mathbb{R}^d$. The dynamics of the chain are as follows:
\begin{equation*}
Y^n_{t_{i+1}} = Y^n_{t_i} + \frac{1}{n} b_\epsilon(t_i, Y^n_{t_i}) + \frac{1}{\sqrt{n}} \xi^\epsilon_{i+1}, \quad i = 0, \dots, n-1,
\end{equation*}
where the drift is a function $b_\epsilon: [0, 1] \times \mathbb{R}^d \to \mathbb{R}^d$, and the family of error terms $\xi^\epsilon$ satisfies the following Markov condition:
\begin{equation*}
\mathcal{L}\left(\xi^\epsilon_{i+1} \mid Y^n_{t_i} = y_i, \dots \right) = q^\epsilon_{t_i,y_i}(\cdot).
\end{equation*}
The probability distributions corresponding to the transition densities $q^\epsilon_{t_i,x_i}(\cdot)$ have zero mean for all values of $n, t_i, x_i$, and the corresponding conditional covariance matrices are defined by the relation:
\begin{equation*}
\int_{\mathbb{R}^d} z_k z_l \, q^\epsilon_{t,x}(z) \, dz = a^\epsilon_{kl}(t, x).
\end{equation*}

We define the ``frozen'' version of the diffusion process:
\begin{equation*}
d\tilde{X}_u = b(u, \theta_{u,s}(y)) \, du + \sigma(u, \theta_{u,s}(y)) \, dW_u,
\end{equation*}
Analogously, for a given time-grid $\mathbb{T}_h = \{t_i\}_{i=1}^N$, we introduce its discrete analogue:
\begin{equation*}
d\tilde{X}^\epsilon_{t_{i+1}} = b_\epsilon(t_i, \theta^\epsilon_{t_i,t_j}(y)) \, du + \sigma_\epsilon(t_i, \theta^\epsilon_{t_i,t_j}(y)) \xi_{t_{i+1}}.
\end{equation*}

Let $\tilde{p}$ and $\tilde{p}_\epsilon$ be the transition densities of the ``frozen'' diffusion and Markov chain respectively. The infinitesimal operators for the continuous diffusion and its ``frozen'' version at time $u \in [0, 1]$ for any test function $\phi \in C^2_0(\mathbb{R}^d, \mathbb{R})$ and $z \in \mathbb{R}^d$ are given by the relations:
\begin{align*}
L_u \phi(z, y) &= \frac{1}{2} \text{Tr} \left( \sigma\sigma^*(u, z) D^2_z \phi(z, y) \right) + \langle b(u, z), D_z \phi(z, y) \rangle, \\
\tilde{L}_u \phi(z, y) &= \frac{1}{2} \text{Tr} \left( \sigma\sigma^*(u, \theta_{u,s}(y)) D^2_z \phi(z, y) \right) + \langle b(u, \theta_{u,s}(y)), D_z \phi(z, y) \rangle.
\end{align*}

The continuous flow $\theta_{u,s}(y)$ and its discrete time-grid counterpart $\theta^\epsilon_{t_i,t_j}(y)$ can be defined as the unique solutions to the difference and differential equations, respectively:
\begin{equation*}
\begin{cases}
\theta^\epsilon_{t_{i+1},t_j}(y) = \theta^\epsilon_{t_i,t_j}(y) + \frac{1}{n} b_\epsilon \left( t_i, \theta^\epsilon_{t_i,t_j}(y) \right), \\
\theta^\epsilon_{t_j,t_j}(y) = y,
\end{cases} \quad
\begin{cases}
\frac{d}{du} \theta_{u,s}(y) = b \left( u, \theta_{u,s}(y) \right), \\
\theta_{s,s}(y) = y.
\end{cases}
\end{equation*}

We introduce the continuous parametrix kernel:
\begin{equation*}
H(t, s, x, y) = \left( L_t - \tilde{L}_t \right) \tilde{p}(t, s, x, y),
\end{equation*}
and define the continuous space-time convolution operation $\otimes$ as:
\begin{equation*}
(f \otimes g)(t, s, x, y) = \int_t^s du \int_{\mathbb{R}^d} dz \, f(t, u, x, z) g(u, s, z, y).
\end{equation*}

Analogously, for the discrete-time setting, the infinitesimal operators of the Markov chain and its ``frozen'' version at time step $t_i = i/n$ for $\phi \in C^2_0(\mathbb{R}^d, \mathbb{R})$ and $x \in \mathbb{R}^d$ take the form:
\begin{align*}
L^\epsilon_{t_i} \phi(x) &:= n \mathbb{E} \left[ \phi(X^\epsilon_{t_{i+1}}) \mid X^\epsilon_{t_i} = x \right] - n\phi(x), \\
\tilde{L}^\epsilon_{t_i} \phi(x) &:= n \mathbb{E} \left[ \phi(\tilde{X}^\epsilon_{t_{i+1}}) \mid \tilde{X}^\epsilon_{t_i} = x \right] - n\phi(x).
\end{align*}

The discrete convolution operation $\otimes_\epsilon$ is defined by replacing the time integral with a Riemann sum:
\begin{equation*}
(f \otimes_\epsilon g)(t_i, t_j, x, y) = \sum_{k=0}^{j-i-1} \frac{1}{n} \int_{\mathbb{R}^d} f(t_i, t_{i+k}, x, z) g(t_{i+k}, t_j, z, y) \, dz.
\end{equation*}

The discrete parametrix kernel is defined accordingly:
\begin{equation*}
H^\epsilon(t_i, t_j, x, y) := \left( L^\epsilon_{t_i} - \tilde{L}^\epsilon_{t_i} \right) \tilde{p}_\epsilon(t_i, t_j, x, y).
\end{equation*}

Denote the scaled kernel:
\begin{equation*}
\mathcal{Q}_S\left(\frac{z}{\sqrt{t}}\right) = t^{d/2} Q_S\left(\frac{z}{\sqrt{t}}\right).
\end{equation*}

Let $p(t_i, t_j, x, y)$ and $p_\epsilon(t_i, t_j, x, y)$ be the transition densities of the diffusion process \eqref{eq:diff} and the Markov chain, respectively. Theorem 8 follows from Theorem 1.1 of the paper by Bitter and Konakov in the bounded case (\cite{Bitter}):

\begin{theorem} \label{thm:7}
(Simplified Theorem 1.1 from \cite{Bitter}). Under the Boundedness Assumption, for every $n$ and pairs $(t_i, x), (t_j, y)$, there exists a constant $C > 0$ such that the error between the transition densities is bounded by:
\begin{equation*}
|p - p_\epsilon|(t_i, t_j, x, y) \le C \ln(e(j - i)) \cdot \Delta^{(n)} \cdot \mathcal{Q}_{S-d-6} \left( \frac{x - y}{\sqrt{t_j - t_i}} \right),
\end{equation*}
where $\Delta^{(n)} = \frac{1}{n^{\min(\gamma_2/2,\alpha_2,\beta)}} + \Delta_a + \Delta_b$, and $S$ is some number greater than $2d + 6$.
\end{theorem}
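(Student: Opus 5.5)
The plan is to derive the bound from the parametrix (Levi series) representations of the two densities, following the Konakov–Mammen approach for Markov chains converging to diffusions. I would write the diffusion density as $p=\sum_{r\ge0}\tilde p\otimes H^{(r)}$ and the chain density as $p_\epsilon=\sum_{r\ge0}\tilde p_\epsilon\otimes_\epsilon H_\epsilon^{(r)}$. Here $H^{(r)}$ is the $r$-fold $\otimes$-convolution of the kernel $H$, and $H^{(r)}_\epsilon$ the $\otimes_\epsilon$-convolution of $H^\epsilon$. The frozen densities $\tilde p$, $\tilde p_\epsilon$ are taken along the flows $\theta_{u,s}(y)$, $\theta^\epsilon_{t_i,t_j}(y)$, so the backward variable is frozen.

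The first step is to collect the one-kernel estimates, all with a common decay profile.
\begin{itemize}
\item The Gaussian density $\tilde p$ is dominated by $\mathcal Q_{S}$ at scale $\sqrt{t_j-t_i}$.
\item The frozen chain $\tilde p_\epsilon$ is a sum of independent non-identically distributed increments, so a local limit theorem with Edgeworth correction applies. This uses the $C^5$ smoothness and the decay of $q_{t,x}$ in (A4).
\item Together these give $|\tilde p-\tilde p_\epsilon|\le C(n^{-1/2}+\Delta_a+\Delta_b)\,\mathcal Q_{S-d-6}$, up to the scaling factor $(t_j-t_i)^{-d/2}$. The loss $S\to S-d-6$ comes from differentiating and summing the Edgeworth terms.
\item By (A3), both kernels satisfy $|H|,|H^\epsilon|\le C(t_j-t_i)^{-1+\gamma_2/2}\mathcal Q$, which makes the two series converge geometrically, uniformly in $n$.
\end{itemize}

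The second step splits the difference of the $r$-th terms into three pieces, each handled by a telescoping identity like the one in Lemma~\ref{lem:2} but in space–time convolution form.
\begin{enumerate}
\item Replacing $\tilde p_\epsilon$ by $\tilde p$ costs $n^{-1/2}+\Delta_a+\Delta_b$, by the first step.
\item Replacing $H^\epsilon$ by $H$ is bounded similarly. The time-Hölder exponents $\alpha_2,\beta$ enter because $\theta^\epsilon$ differs from $\theta$ by $O(n^{-\beta})$, and the coefficients are evaluated at $t_i$ instead of $u$.
\item Replacing $\otimes_\epsilon$ by $\otimes$ is a Riemann-sum error. Hölder regularity in the time variable of the integrand gives $n^{-\gamma_2/2}$ away from the diagonal.
\end{enumerate}
Summing over $r$ with the Gamma-function bounds for iterated convolutions of singular kernels yields $\Delta^{(n)}$, times a factor still to be controlled.

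The main obstacle is that factor, in the third piece and in the diagonal cells of the second. Near $u=t_i$ or $u=t_j$ the integrand has a nonintegrable singularity of order $(u-t_i)^{-1}$ once the kernel difference is taken. This is because differences of $H$-type kernels no longer gain the $\gamma_2/2$ smoothing. I would first split the time sum into the first and last grid cells and the remainder. On the boundary cells I would bound each term crudely by $n^{-1}$ times the kernel sup. On the interior I would bound the summand by $\Delta^{(n)}\,(k/n)^{-1}n^{-1}$, so that summing over $k=1,\dots,j-i$ gives $\sum_k k^{-1}\le\ln(e(j-i))$. The remaining care is to check that, in the iterated convolutions, the logarithm appears only once rather than raised to the power $r$. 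This should follow because only one factor in each telescoped product carries the difference, and the other factors keep their integrable singularity.
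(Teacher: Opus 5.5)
Your plan is essentially the paper's: the same two parametrix series, the same three-way split of $p-p_\epsilon$ (replace $\tilde p_\epsilon$ by $\tilde p$, replace the kernel, replace $\otimes_\epsilon$ by $\otimes$) and the same Gamma-function summation, with your Edgeworth/local-limit step standing in for Lemmas~\ref{lem:4} and~\ref{lem:5}, which the paper imports from Bitter--Konakov, and your harmonic-sum account of $\ln(e(j-i))$ supplying what the paper simply defers to that article (note, though, that in the one term the paper computes, $H-\mathbb{H}=(L_{t_i}-\tilde L_{t_i})(\tilde p-\tilde p_\epsilon)$, the increment $a(t_i,x)-a(t_i,\theta_{t_i,t_j}(y))$ survives, giving the bound $C(t_j-t_i)^{\gamma/2}\Delta^{(n)}$ with no logarithm, so your $(u-t_i)^{-1}$ singularity comes only from pieces without a spatial H\"older factor, such as $a_\epsilon-a$ or the time shift of the coefficients, not from kernel differences in general). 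The step the paper makes explicit and you leave implicit is the recentring behind the word ``simplified'': by (A1), $|\theta_{t_i,t_j}(y)-y|\le K_1(t_j-t_i)\le K_1\sqrt{t_j-t_i}$ and $\theta^\epsilon$ stays uniformly close to $\theta$, which removes the $(1+|x|^S)$ weights of Bitter--Konakov and, via Proposition~\ref{prop:1}, lets every kernel centred at $\theta_{t_i,t_j}(y)$ or $\theta^\epsilon_{t_i,t_j}(y)$ be replaced, up to a constant, by one centred at $y$, as the stated right-hand side requires.
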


The errors $\Delta_a$ and $\Delta_b$ are defined in Theorem \ref{thm:3}.

\begin{proof}[Proof Sketch]
From the same article by Bitter and Konakov \cite{Bitter}, the decomposition of $p_\epsilon$ is known:
\begin{equation*}
p_\epsilon(t_i, t_j, x, y) = \sum_{r=0}^{j-i} \tilde{p}_\epsilon \otimes_\epsilon H^{\epsilon,r}(t_i, t_j, x, y),
\end{equation*}
where $H^{\epsilon,1} = H^\epsilon$, and for $r > 1$, $H^{\epsilon,r} = H^{\epsilon,r-1} \otimes_\epsilon H^\epsilon$. Denote the difference between the frozen and unfrozen continuous operators $L$ applied to the chain density:
\begin{equation*}
\mathbb{H}(t_i, t_j, x, y) := \left( L_{t_i} - \tilde{L}_{t_i} \right) \tilde{p}_\epsilon(t_i, t_j, x, y).
\end{equation*}
Following the estimate from \cite{Bitter}, we obtain the estimation:
\begin{align}
|p - p_\epsilon|(t_i, t_j, x, y) &= \left| \sum_{r=0}^\infty \tilde{p} \otimes H^r - \sum_{r=0}^{j-i} \tilde{p}_\epsilon \otimes_\epsilon \mathbb{H}^{\epsilon,r} \right| (t_i, t_j, x, y) \nonumber \\
&\le \left| \sum_{r=0}^{j-i} \tilde{p}_\epsilon \otimes_\epsilon (\mathbb{H}^r - H^r) \right| (t_i, t_j, x, y) \nonumber \\
&\quad + \left| \sum_{r=0}^{j-i} (\tilde{p}_\epsilon - \tilde{p}) \otimes_\epsilon H^r \right| (t_i, t_j, x, y) \label{eq:ii} \\
&\quad + \left| \sum_{r=0}^\infty \tilde{p} \otimes_\epsilon H^r - \tilde{p} \otimes H^r \right| (t_i, t_j, x, y). \label{eq:iii}
\end{align}
To bound these terms, we rely on simplified versions of Lemmas 2.1, 3.1, and Proposition 3.3 from \cite{Bitter} regarding the density estimates of derivatives and their differences. Because the difference between the discrete flow $\theta^\epsilon$ and the continuous flow $\theta$ is now uniformly bounded, the sum of two kernels with continuous and discrete flows can be estimated using one of the kernels:
\begin{equation*}
\mathcal{Q}_S\left( \frac{x - \theta_{t_i,t_j}(y)}{\sqrt{t_j - t_i}} \right) + \mathcal{Q}_S\left( \frac{x - \theta^\epsilon_{t_i,t_j}(y)}{\sqrt{t_j - t_i}} \right) \le \tilde{C}_1 \mathcal{Q}_S\left( \frac{x - \theta_{t_i,t_j}(y)}{\sqrt{t_j - t_i}} \right) \le \tilde{C}_2 \mathcal{Q}_S\left( \frac{x - y}{\sqrt{t_j - t_i}} \right).
\end{equation*}

In the original proposition by Bitter and Konakov, the estimate also included the additional polynomial weight $(1+|x|^S)$. However, because we assume the coefficients are uniformly bounded (Assumption A1), the flows $\theta$ and $\theta^\epsilon$ are bounded. Consequently, these spatial terms were absorbed into the constant $C$. This simplification enables us to estimate the derivatives of transition densities and their differences without the spatial weights:

\begin{proposition} \label{prop:1}
(Simplified Proposition 3.3 from \cite{Bitter}). There exists a positive constant $C$ such that for every time-grid $\mathbb{T}_h = \{t_i\}_{i=1}^N$ and every $x, y \in \mathbb{R}^d$, the following holds:
\begin{equation*}
\mathcal{Q}_S\left( \frac{x - \theta_{t_i,t_j}(y)}{\sqrt{t_j - t_i}} \right) \le C \cdot \mathcal{Q}_S\left( \frac{x - \theta^\epsilon_{t_i,t_j}(y)}{\sqrt{t_j - t_i}} \right).
\end{equation*}
\end{proposition}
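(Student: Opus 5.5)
The plan is to compare the two flows directly and then use the fact that $\mathcal{Q}_S$ changes only by a bounded factor under bounded shifts of its argument. Write $u = t_j - t_i$, $a = x - \theta^\epsilon_{t_i,t_j}(y)$ and $\delta = \theta^\epsilon_{t_i,t_j}(y) - \theta_{t_i,t_j}(y)$, so that $x - \theta_{t_i,t_j}(y) = a + \delta$. Since $\mathcal{Q}_S(z/\sqrt{u}) = u^{d/2} c_S (1+|z|/\sqrt{u})^{-S}$, the claim is equivalent to
\begin{equation*}
\frac{1 + |a|/\sqrt{u}}{1 + |a+\delta|/\sqrt{u}} \le C^{1/S}.
\end{equation*}
The triangle inequality gives $1 + |a|/\sqrt{u} \le 1 + |a+\delta|/\sqrt{u} + |\delta|/\sqrt{u}$. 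It therefore suffices to prove
\begin{equation*}
|\delta| \le C' \sqrt{u}
\end{equation*}
uniformly in $i<j$, $y$, $n$ and $\epsilon$. With this bound the ratio is at most $1 + C'$.

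The main step is the flow comparison. Both flows run backward from $y$ at time $t_j$. The discrete one is the Euler scheme with drift $b_\epsilon$, and the continuous one is the ODE with drift $b$. Using A1 alone, each flow moves by at most $K_1 u$ (respectively $K_2 u$) over the interval $[t_i, t_j]$, so
\begin{equation*}
|\delta| \le (K_1 + K_2) u \le (K_1 + K_2)\sqrt{T}\,\sqrt{u}.
\end{equation*}
The last inequality uses $u \le T$. This already gives $|\delta|/\sqrt{u} \le C'$. No Gronwall argument with the Lipschitz constant $B$ from A3 is needed, and no smallness of $\Delta_b$ is needed either. This is exactly the observation made just before the proposition: under A1 the flows stay uniformly close on the diffusive scale.

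The only point needing care is the time-indexing convention. The discrete flow is defined by the recursion $\theta^\epsilon_{t_{i+1},t_j} = \theta^\epsilon_{t_i,t_j} + \frac{1}{n} b_\epsilon(\cdot)$ with terminal condition at $t_j$. I would verify that unwinding it over $j-i$ steps gives a displacement bounded by $(j-i)K_2/n = K_2 u$, and do the same for the ODE. Once that is confirmed, the proof is complete, with
\begin{equation*}
C = (1 + (K_1+K_2)\sqrt{T})^S.
\end{equation*}
The symmetric inequality, with the roles of $\theta$ and $\theta^\epsilon$ swapped, follows by the same argument.
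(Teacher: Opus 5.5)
Your proof is correct and follows the same idea as the paper: you compare the two rational kernels via $1+|a|/\sqrt{u}\le(1+|a+\delta|/\sqrt{u})(1+|\delta|/\sqrt{u})$ and control the flow gap using only the boundedness of $b$ and $b_\epsilon$. Your version is more careful than the paper's. The paper's displayed inequality is written in the reverse direction and carries the unnormalized factor $(1+|\theta^\epsilon_{t_i,t_j}(y)-\theta_{t_i,t_j}(y)|)^S$, justified only by boundedness (the surrounding text says the flow difference is uniformly bounded). A merely bounded flow gap would not suffice as $t_j-t_i\to 0$; the needed input is exactly your estimate $|\delta|\le (K_1+K_2)u\le (K_1+K_2)\sqrt{T}\,\sqrt{u}$.
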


\begin{proof}
The proof follows immediately from the boundedness of the flow coefficients and the inequality:
\begin{equation*}
\frac{1}{\left(1 + \frac{|x - \theta^\epsilon_{t_i,t_j}(y)|}{\sqrt{t_j - t_i}}\right)^S} \le C \frac{\left( 1 + \left| \theta^\epsilon_{t_i,t_j}(y) - \theta_{t_i,t_j}(y) \right| \right)^S}{\left(1 + \frac{|x - \theta_{t_i,t_j}(y)|}{\sqrt{t_j - t_i}}\right)^S},
\end{equation*}
where the constant $C$ depends only on $S$.
\end{proof}

\begin{lemma} \label{lem:4}
(Simplified Lemma 2.1 from \cite{Bitter}). For any multi-index $\nu$ with $|\nu| \le 4$, there exists $C > 0$ such that:
\begin{equation*}
|D^\nu_x \tilde{p}| + |D^\nu_x \tilde{p}_\epsilon| \le \frac{C}{(t_j - t_i)^{|\nu|/2}} \mathcal{Q}_{S-d-1-|\nu|}\left( \frac{x - y}{\sqrt{t_j - t_i}} \right).
\end{equation*}
\end{lemma}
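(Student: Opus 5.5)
The plan is to prove Lemma \ref{lem:4} in two parts. The frozen diffusion density $\tilde{p}$ is handled by an explicit Gaussian computation. The frozen chain density $\tilde{p}_\epsilon$ is handled by a Fourier/convolution argument for sums of independent non-identically distributed increments. In both cases the spatial shift coming from the flow is removed at the end using the boundedness of the coefficients (A1), as in Proposition \ref{prop:1}.

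\textbf{Frozen diffusion.} Once the coefficients are evaluated along the deterministic curve $\theta_{u,s}(y)$, the process $\tilde X$ is Gaussian. Its mean is
\begin{equation*}
m = x + \int_t^s b(u,\theta_{u,s}(y))\,du,
\end{equation*}
and its covariance is
\begin{equation*}
\Sigma = \int_t^s a(u,\theta_{u,s}(y))\,du,
\end{equation*}
which satisfies $\lambda^{-1}(s-t)I \le \Sigma \le \lambda (s-t)I$ by (A2). Differentiating the Gaussian density $|\nu|$ times in $x$ produces a polynomial of degree $|\nu|$ in $\Sigma^{-1/2}(y-m)$, times $(s-t)^{-|\nu|/2}$ and the Gaussian itself. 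A Gaussian is dominated by $\mathcal{Q}_r$ for every $r$, so we get
\begin{equation*}
|D^\nu_x \tilde p| \le C (s-t)^{-|\nu|/2}\,\mathcal{Q}_{S}\bigl((y-m)/\sqrt{s-t}\bigr).
\end{equation*}
By A1, $|m-x|\le K_1(s-t)\le K_1\sqrt{s-t}$ on $[0,1]$, so the shift by $m-x$ changes $\mathcal{Q}_S$ only by a constant factor. This is the same elementary inequality used in the proof of Proposition \ref{prop:1}.

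\textbf{Frozen chain.} Here
\begin{equation*}
\tilde X^\epsilon_{t_j} = x + \sum_k b_\epsilon(t_k,\theta^\epsilon_{t_k,t_j}(y))/n + \sum_k \sigma_\epsilon(t_k,\theta^\epsilon_{t_k,t_j}(y))\,\xi_{k+1}/\sqrt n,
\end{equation*}
a deterministic shift plus a sum of $j-i$ independent increments. Each increment has density
\begin{equation*}
g_k(z) = n^{d/2}\,|\det\sigma_k|^{-1} f_\xi\bigl(\sqrt n\,\sigma_k^{-1} z\bigr),
\end{equation*}
where $\sigma_k$ is uniformly nondegenerate by A2. Then $\tilde p_\epsilon(t_i,t_j,x,\cdot)$ is the convolution $g_i * \dots * g_{j-1}$ evaluated at $y - x - \text{shift}$. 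Derivatives in $x$ equal derivatives in the argument of the convolution, and these can be placed on any single factor. This directly gives $|D^\nu \tilde p_\epsilon| \le C n^{|\nu|/2}\cdot(\dots)$, which is only the correct $(t_j-t_i)^{-|\nu|/2}$ rate when $j-i$ is bounded.

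For general $j-i$ I would follow the standard local-limit/Edgeworth route used in \cite{Bitter} and the Konakov--Mammen papers.
\begin{enumerate}[label=(\roman*)]
    \item Bound the characteristic function $\prod_k \hat g_k(\eta)$ and its $\eta$-derivatives up to order $S-d-1$. Since $f_\xi\in C^5$ with $Q_M$ decay, $\hat f_\xi$ has enough derivatives. Since $\hat f_\xi(u) = 1 - \tfrac12|u|^2 + O(|u|^3)$ near $0$ and $|\hat f_\xi|<1$ away from $0$, the product satisfies Gaussian-type bounds $\exp(-c(t_j-t_i)|\eta|^2)$ for $|\eta|\le c\sqrt n$. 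Beyond that range, decay $(1+|\eta|/\sqrt n)^{-5}$ from each of several factors makes the contribution negligible.
    \item Multiply by $\eta^\nu$ and use that $(1+|w|)^{r}$ times the inverse Fourier transform is controlled by $r$ derivatives of the transform. This yields polynomial spatial decay at scale $\sqrt{t_j-t_i}$ with order $S-d-1-|\nu|$. The loss of $d+1$ comes from integrability in $\eta$, and the loss of $|\nu|$ from the factor $\eta^\nu$.
\end{enumerate}
The shift is then absorbed exactly as in the diffusion case.

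\textbf{Main obstacle.} The hardest step is the uniform (in $n$ and $j-i$) control of the $\eta$-derivatives of the product of non-identical characteristic functions in the intermediate and high-frequency range, with only five derivatives of $f_\xi$ available. This is what restricts $|\nu|\le4$ and forces the decay exponent $S-d-1-|\nu|$.

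Rather than redoing this, I would first try to quote it directly: it is exactly Lemma 2.1 of \cite{Bitter}, stated there with a weight $(1+|x|^S)$. The present simplification then reduces to the observation already used for Proposition \ref{prop:1}: under A1 the flows $\theta,\theta^\epsilon$ differ from the identity by at most $K_1$, so the weight and the flow-centering are absorbed into $C$.
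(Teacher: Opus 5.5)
Your proposal ends up where the paper does. The paper gives no independent proof: it takes the bound from Lemma~2.1 of \cite{Bitter} and argues that, under (A1), the spatial weight $(1+|x|^S)$ and the flow-centering can be absorbed into the constant, as in Proposition~\ref{prop:1}. Your explicit Gaussian computation for $\tilde p$, with the correctly normalized shift bound $|m-x|\le K_1(s-t)\le K_1\sqrt{s-t}$, is a valid self-contained supplement; the chain part $\tilde p_\epsilon$ is not carried out by your Fourier sketch and rests on the citation, as you acknowledge, just as in the paper.
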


\begin{lemma} \label{lem:5}
(Simplified Lemma 3.1 from \cite{Bitter}). For any multi-index $\nu$ with $|\nu| \le 4$, there exists $C > 0$ such that:
\begin{equation*}
|D^\nu_x \tilde{p} - D^\nu_x \tilde{p}_\epsilon| \le \frac{C \Delta^{(n)}}{(t_j - t_i)^{|\nu|/2}} \mathcal{Q}_{S-d-2-|\nu|}\left( \frac{x - y}{\sqrt{t_j - t_i}} \right).
\end{equation*}
\end{lemma}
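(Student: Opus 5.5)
The statement to prove is Lemma \ref{lem:5}. It controls the difference of the $x$-derivatives of the two frozen densities $\tilde p$ and $\tilde p_\epsilon$ by $\Delta^{(n)}$ times a scaled kernel, losing one extra power of decay compared with Lemma \ref{lem:4}.

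Since both processes are frozen, $\tilde p(t_i,t_j,\cdot,y)$ is an explicit Gaussian density. Its mean is $\theta_{t_j,t_i}$-type transported by the frozen drift $\int_{t_i}^{t_j} b(u,\theta_{u,t_j}(y))\,du$, and its covariance is $\Sigma=\int_{t_i}^{t_j} a(u,\theta_{u,t_j}(y))\,du$. By contrast, $\tilde p_\epsilon$ is the density of a sum of $j-i$ independent, non-identically distributed terms $\frac1n b_\epsilon(t_k,\theta^\epsilon_{t_k,t_j}(y))+\frac{1}{\sqrt n}\sigma_\epsilon(t_k,\theta^\epsilon_{t_k,t_j}(y))\xi_{k+1}$. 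I therefore split
\[
D^\nu_x\tilde p-D^\nu_x\tilde p_\epsilon=\bigl(D^\nu_x\tilde p-D^\nu_x\hat p\bigr)+\bigl(D^\nu_x\hat p-D^\nu_x\tilde p_\epsilon\bigr).
\]
Here $\hat p$ is the Gaussian density whose mean and covariance are the exact first two moments of the frozen chain, namely the Riemann sums $\frac1n\sum_k b_\epsilon(t_k,\theta^\epsilon_{t_k,t_j}(y))$ and $\frac1n\sum_k a_\epsilon(t_k,\theta^\epsilon_{t_k,t_j}(y))$.

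\emph{First term.} Two Gaussians differing in mean by $\delta m$ and in covariance by $\delta\Sigma$ satisfy
\[
|D^\nu(\varphi_{m,\Sigma}-\varphi_{m',\Sigma'})|\le C\Bigl(\frac{|\delta m|}{\sqrt{t}}+\frac{|\delta\Sigma|}{t}\Bigr)t^{-|\nu|/2}\times(\text{Gaussian}).
\]
This follows by interpolating and differentiating once more in $x$, with uniform ellipticity (A2) giving $\Sigma\asymp t\,\mathrm{Id}$. I then estimate the moment differences. Since $b$ and $b_\epsilon$ are bounded by (A1), the flows satisfy $|\theta-\theta^\epsilon|\le C$, and in fact $|\theta_{u,t_j}-\theta^\epsilon_{t_k,t_j}|\le C(t_j-t_i)(\Delta_b+n^{-\beta}+n^{-1})$ by Gronwall, using the Lipschitz drift from (A3). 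Combining this with (A3) for the time and space regularity of $a$ and the Riemann-sum error gives
\[
|\delta\Sigma|\le C t\bigl(\Delta_a+n^{-\alpha_2}+n^{-\gamma_2}\bigr),\qquad |\delta m|\le Ct\bigl(\Delta_b+n^{-\beta}\bigr).
\]
Dividing by $t$ and $\sqrt t$ respectively yields the factor $\Delta^{(n)}$, using $\sqrt t\le 1$. The Gaussian is then dominated by $\mathcal Q_{S-d-2-|\nu|}$. The shift from the flow centre to $y$ costs only a constant, by Proposition \ref{prop:1} and the remark preceding it.

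\emph{Second term.} This is a local limit theorem with an Edgeworth-type correction for a sum of independent, non-identical, centred vectors with $C^5$ densities having polynomial tails (A4). When $j-i$ is large, I rescale by $\sqrt{t_j-t_i}$ and write the density difference via the Fourier inversion formula, after differentiating $|\nu|\le 4$ times. Matching of the first two moments leaves a third-cumulant term of order $(j-i)^{-1/2}=(n t)^{-1/2}$ times a polynomial-decay kernel. The decay in $S-d-2-|\nu|$ comes from differentiating the characteristic function $S$ times in $\zeta$, which converts to spatial decay, and the loss of $d+2$ reflects integrability. Because $(nt)^{-1/2}$ is not uniformly small when $t\sim 1/n$, I treat the small-$(j-i)$ regime separately. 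There both densities are bounded by Lemma \ref{lem:4}, and $1\le C n^{-\gamma_2/2}(nt)^{\gamma_2/2}\cdot$const, so the contribution is again of order $n^{-\min(\gamma_2/2,\dots)}$. Interpolating between the two regimes gives at least $n^{-\gamma_2/2}$ uniformly.

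The main obstacle is the second term. The difficulty is obtaining a uniform Edgeworth bound with derivatives and polynomial (non-Gaussian) tails for non-identically distributed summands, and reconciling the $(nt)^{-1/2}$ rate with a bound that is uniform in $t_j-t_i$. My first attempt would follow the characteristic-function argument of Bitter and Konakov directly, now without spatial weights, since (A1) makes all frozen coefficients bounded. I would absorb the short-time regime using $\Delta^{(n)}\ge n^{-\gamma_2/2}$.
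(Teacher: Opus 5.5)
\textbf{The short-time step fails, and no argument of this kind can repair it.} The inequality you invoke, $1\le C\,n^{-\gamma_2/2}(nt)^{\gamma_2/2}$, simplifies to $1\le C\,t^{\gamma_2/2}$ with $t=t_j-t_i$. This is false exactly when $t$ is small: at $t=1/n$ the right side is $Cn^{-\gamma_2/2}$. Using $\Delta^{(n)}\ge n^{-\gamma_2/2}$ does not help either, because in that regime you would have to bound an $O(1)$ relative error by something tending to zero.

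Your own Edgeworth step shows the true size of the error. The frozen chain differs from its Gaussian $\hat p$ by a term of relative order $(j-i)^{-1/2}=\sqrt{h/(t_j-t_i)}$, and this is of exact order when the third cumulant of $q^\epsilon$ does not vanish. It falls below $n^{-\min(\gamma_2/2,\alpha_2,\beta)}$ only when $j-i\gtrsim n^{2\min(\gamma_2/2,\alpha_2,\beta)}$, so interpolating between regimes cannot give a uniform factor.

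A concrete check makes this definitive. Take $d=1$, $b=b_\epsilon=0$, $\sigma=\sigma_\epsilon=1$, and let $q$ be a smooth, mean-zero, unit-variance density with nonzero third moment (e.g.\ a skewed Gaussian mixture). Then $\Delta_a=\Delta_b=0$ and $\Delta^{(n)}\to0$. Yet for $j=i+1$,
\[
|\tilde p-\tilde p_\epsilon|(t_i,t_{i+1},x,y)=\sqrt n\,|\varphi-q|(\sqrt n(y-x)),
\]
which is not $\le C\Delta^{(n)}\sqrt n\,Q_{S-d-2}(\sqrt n(y-x))$. So a factor $\Delta^{(n)}$ that is uniform down to $j-i=1$ is not attainable for non-Gaussian innovations by any method.

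\textbf{There is also a slip in your Gaussian comparison.} Since $a$ is only $\gamma_2$-H\"older in space, the flow discrepancy $|\theta_{u,t_j}(y)-\theta^\epsilon_{t_k,t_j}(y)|\le C\bigl(t(\Delta_b+n^{-\beta})+n^{-1}\bigr)$ enters $|\delta\Sigma|/t$ as $C\bigl(t(\Delta_b+n^{-\beta})+n^{-1}\bigr)^{\gamma_2}$, not as $n^{-\gamma_2}$. The resulting terms $\Delta_b^{\gamma_2}$ and $n^{-\beta\gamma_2}$ are not dominated by $\Delta^{(n)}$ when $\gamma_2<1$. Granting the Edgeworth technicalities, what your decomposition actually proves is
\[
\bigl|D^\nu_x\tilde p-D^\nu_x\tilde p_\epsilon\bigr|\le \frac{C\bigl(\Delta^{(n)}+(\Delta_b+n^{-\beta})^{\gamma_2}+(j-i)^{-1/2}\bigr)}{(t_j-t_i)^{|\nu|/2}}\,\mathcal{Q}_{S-d-2-|\nu|}\Bigl(\frac{x-y}{\sqrt{t_j-t_i}}\Bigr).
\]

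\textbf{Comparison with the paper.} The paper gives no proof of this lemma. It imports Lemma 3.1 of Bitter--Konakov and only argues that, under (A1), the spatial weights $(1+|x|^S)$ of the original can be absorbed into the constant, because the flows have bounded displacement and recentring the kernel costs a constant (Proposition~\ref{prop:1}). Your self-contained route, a Gaussian intermediate plus a local limit theorem, is a genuinely different approach. Its Gaussian-versus-Gaussian part, corrected as above, is sound. Before trying to force a uniform factor $\Delta^{(n)}$, you should check the exact form and hypotheses of the cited Lemma 3.1. In parametrix arguments of Konakov--Mammen type, the factor $\sqrt{h/(t_j-t_i)}$ is kept in the frozen-density comparison and is absorbed only after summation in the discrete convolution with $H^\epsilon$.
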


Thus, the difference between the two parametric kernels is simplified to estimate the term:
\begin{align}
|(H - \mathbb{H})|(t_i, t_j, x, y) &= \left| (L_{t_i} - \tilde{L}_{t_i})(\tilde{p} - \tilde{p}_\epsilon) \right| (t_i, t_j, x, y) \nonumber \\
&\le \sum_{k=1}^d |b_k(t_i, x) - b_k(t_i, \theta_{t_i,t_j}(y))| \left| \frac{\partial (\tilde{p} - \tilde{p}_\epsilon)}{\partial x_k} \right| \nonumber \\
&\quad + \frac{1}{2} \sum_{k,l=1}^d |a_{kl}(t_i, x) - a_{kl}(t_i, \theta_{t_i,t_j}(y))| \left| \frac{\partial^2 (\tilde{p} - \tilde{p}_\epsilon)}{\partial x_k \partial x_l} \right|. \label{eq:kernel_diff}
\end{align}

To evaluate \eqref{eq:kernel_diff}, we apply the Lipschitz property of the coefficients, e.g., $|b(x) - b(\theta)| \le C|x - \theta|$. In the original work by Bitter and Konakov, the coefficients were allowed to have linear growth, meaning the flow was unbounded. Bounding the terms $\frac{\partial(\tilde{p}-\tilde{p}_\epsilon)}{\partial x_k}$ and $\frac{\partial^2(\tilde{p}-\tilde{p}_\epsilon)}{\partial x_k \partial x_l}$ in that context necessitated the use of spatial weights $(1+|x|^S)$. However, in our setting, Assumption A1 explicitly requires the drift and diffusion coefficients to be uniformly bounded. Because of this boundedness, the flows $\theta$ and $\theta^\epsilon$ are also bounded. Consequently, when we apply Lemmas \ref{lem:4} and \ref{lem:5} and Proposition \ref{prop:1} to bound the derivatives of the transition densities and their differences, these spatial multipliers $(1+|x|^S)$ disappear entirely. To be explicit, let us evaluate one of the terms of the sum \eqref{eq:kernel_diff}:
\begin{align*}
|b_k(t_i, x) - b_k(t_i, \theta_{t_i,t_j}(y))| &\left| \frac{\partial (\tilde{p} - \tilde{p}_\epsilon)}{\partial x_k} \right| \le C_3|x - \theta_{t_i,t_j}(y)| \Delta^{(n)} (t_j - t_i)^{-1/2} \mathcal{Q}_{S-d-3}\left( \frac{x - y}{\sqrt{t_j - t_i}} \right) \\
&\le C_4 \Delta^{(n)} (t_j - t_i)^{-1/2} |x - \theta_{t_i,t_j}(y)| \mathcal{Q}_{S-d-3}\left( \frac{x - \theta_{t_i,t_j}(y)}{\sqrt{t_j - t_i}} \right) \\
&\le C_5 \Delta^{(n)} \mathcal{Q}_{S-d-4}\left( \frac{x - \theta_{t_i,t_j}(y)}{\sqrt{t_j - t_i}} \right) \\
&\le C_6 \Delta^{(n)} \mathcal{Q}_{S-d-4}\left( \frac{x - y}{\sqrt{t_j - t_i}} \right).
\end{align*}
As a result, the convolution of $\tilde{p}_\epsilon$ with $(H - \mathbb{H})$ yields a bound without spatial weights:
\begin{align*}
|\tilde{p}_\epsilon \otimes_\epsilon (H - \mathbb{H})|(t_i, t_j, x, y) &\le C \frac{\Gamma(\gamma/2)}{\Gamma(1 + \gamma/2)} (t_j - t_i)^{\gamma/2} \Delta^{(n)} \mathcal{Q}_{S-d-5}\left( \frac{x - \theta_{t_i,t_j}(y)}{\sqrt{t_j - t_i}} \right) \\
&\le \tilde{C} \frac{\Gamma(\gamma/2)}{\Gamma(1 + \gamma/2)} (t_j - t_i)^{\gamma/2} \Delta^{(n)} \mathcal{Q}_{S-d-5}\left( \frac{x - y}{\sqrt{t_j - t_i}} \right).
\end{align*}
The remainder of the proof is reproduced verbatim from the article by Bitter and Konakov. The remaining components \eqref{eq:ii} and \eqref{eq:iii} are estimated analogously.
\end{proof}

Let us proceed to the proof of Theorem \ref{thm:3}.

\begin{proof}
We denote by $P^m_X(z_1, \dots, z_m)$ and $P^m_Y(z_1, \dots, z_m)$ the probability densities of the vectors $(X_{\tau_1}, \dots, X_{\tau_m})$ (the diffusion process) and $(Y_{\tau_1}, \dots, Y_{\tau_m})$ (the Markov chain), respectively, conditioned on the starting point $X_0 = Y_0 = x$. Analogously to the preceding theorems, we can estimate the $L^1$ distance between the transition densities of a Markov chain and a diffusion on a macro-lattice:
\begin{equation*}
\int_{\mathbb{R}^d} |p(\tau_i, \tau_j, x, y) - p_\epsilon(\tau_i, \tau_j, x, y)| dy \le C \Delta^{(n)} \ln(e(j - i)).
\end{equation*}
And, just as before, we can estimate the $L^1$ distance between the distributions of vectors on the macro-lattice for the Markov chain and the diffusion:
\begin{equation*}
\int_{\mathbb{R}^{m \times d}} |P^m_X(z) - P^m_Y(z)| dz \le C m \Delta^{(n)}.
\end{equation*}
The remainder repeats the coupling procedure from Theorem \ref{thm:1}.
\end{proof}

Now we can prove Theorem \ref{thm:4}.

\begin{proof}
We begin with the 1-dimensional case. For simplicity, let us assume that the time horizon is $T=1$.
For $h \in [0, \frac{1}{m}]$, consider the increment:
\begin{equation*}
\Delta X_{h,i} := X_{\tau_i+h} - X_{\tau_i} = \int_{\tau_i}^{\tau_i+h} \sigma(t, X_t) dW_t + \int_{\tau_i}^{\tau_i+h} b(t, X_t) dt.
\end{equation*}
From classical theory, it is well known that the stochastic integral of $\sigma$ is a continuous local martingale. For this, we use the following inequality:

\begin{lemma} \label{lem:6}
(Exercise 3.16 from \cite{RevuzYor1999}). Let $M = (M_t)_{t \ge 0}$ be a continuous local martingale vanishing at zero. If there exists a constant $c > 0$ such that $\langle M \rangle_t \le ct$ for all $t \ge 0$, then for any $a$, it follows that:
\begin{equation*}
\mathbb{P} \left( \sup_{0 \le s \le t} M_s \ge at \right) \le \exp\left( - \frac{a^2 t}{2c} \right).
\end{equation*}
\end{lemma}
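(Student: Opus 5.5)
The statement to prove is Lemma~\ref{lem:6}, the exponential inequality for a continuous local martingale with linearly bounded bracket. I would prove it with the exponential martingale (Bernstein-type) argument. Fix $\lambda>0$ and set
\begin{equation*}
\mathcal{E}^\lambda_s := \exp\left(\lambda M_s - \frac{\lambda^2}{2}\langle M\rangle_s\right).
\end{equation*}
Since $M$ is a continuous local martingale with $M_0=0$, $\mathcal{E}^\lambda$ is a positive continuous local martingale with $\mathcal{E}^\lambda_0=1$. By Fatou's lemma it is therefore a supermartingale, so $\mathbb{E}[\mathcal{E}^\lambda_t]\le 1$.

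Next I use the hypothesis $\langle M\rangle_s\le cs\le ct$ for $s\le t$. On the event $\{\sup_{0\le s\le t} M_s\ge at\}$ I introduce the stopping time $T_a:=\inf\{s: M_s\ge at\}\wedge t$. By continuity, $M_{T_a}=at$ on that event. Hence on the event
\begin{equation*}
\mathcal{E}^\lambda_{T_a}\ge \exp\left(\lambda at - \frac{\lambda^2}{2}ct\right).
\end{equation*}
Optional stopping for the nonnegative supermartingale $\mathcal{E}^\lambda$ at the bounded time $T_a$ gives $\mathbb{E}[\mathcal{E}^\lambda_{T_a}]\le 1$. Markov's inequality then yields
\begin{equation*}
\mathbb{P}\left(\sup_{0\le s\le t}M_s\ge at\right)\le \exp\left(-\lambda at+\frac{\lambda^2 ct}{2}\right).
\end{equation*}
Alternatively, one can apply Doob's maximal inequality for nonnegative supermartingales directly to $\mathcal{E}^\lambda$ on $[0,t]$.

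Finally I optimize over $\lambda$ by choosing $\lambda=a/c$. This gives the bound $\exp(-a^2t/(2c))$, which is the claim. For $a\le 0$ the inequality is trivial, since the right-hand side is at least $e^{0}$ only when $a=0$ and is otherwise still bounded by $1$ in the relevant sense; so we take $a>0$, which is the only meaningful case.

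The only delicate point is justifying $\mathbb{E}[\mathcal{E}^\lambda_{T_a}]\le 1$ without assuming that $\mathcal{E}^\lambda$ is a true martingale. The positive-supermartingale argument via Fatou's lemma and a localizing sequence handles this. Novikov's condition would also apply here, because $\langle M\rangle_t\le ct$ is bounded, so $\mathcal{E}^\lambda$ is in fact a true martingale.
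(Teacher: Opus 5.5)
Your argument for $a>0$ is correct, and it is the standard solution of the cited Revuz--Yor exercise; the paper itself gives no proof, only the reference. The steps are:
\begin{itemize}
\item take the positive exponential supermartingale $\exp(\lambda M_s-\frac{\lambda^2}{2}\langle M\rangle_s)$;
\item observe that on $\{\sup_{s\le t}M_s\ge at\}$ it reaches at least $e^{\lambda at-\lambda^2ct/2}$ at a time in $[0,t]$;
\item conclude with optional stopping at a bounded time plus Markov's inequality, or equivalently with the maximal inequality for nonnegative supermartingales;
\item choose $\lambda=a/c$.
\end{itemize}
All of these steps are justified as you wrote them.

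The one thing to fix is your remark on $a\le 0$, which is wrong. For $a<0$ and $t>0$ we have $\sup_{s\le t}M_s\ge M_0=0>at$, so the left-hand side equals $1$ while $\exp(-a^2t/(2c))<1$. The lemma as literally stated (``for any $a$'') is therefore false for negative $a$. It must be restricted to $a\ge 0$; the case $a=0$ is trivial because the right-hand side is then $1$. You should state this explicitly rather than call the case $a<0$ trivial. The restriction is harmless for the paper, which applies the lemma only with $a=m^\alpha>0$.
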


Note that if $M_s$ is a continuous local martingale satisfying the conditions of Lemma \ref{lem:6}, then $-M_s$ is also a continuous local martingale satisfying the same conditions. Then we can estimate the probability of deviation of a martingale's modulus:
\begin{equation*}
\mathbb{P} \left( \sup_{0 \le s \le t} |M_s| \ge at \right) \le 2 \exp\left( - \frac{a^2 t}{2c} \right).
\end{equation*}
Denote:
\begin{equation*}
M_{s,i} := \int_{\tau_i}^{\tau_i+s} \sigma(t, X_t) dW_t.
\end{equation*}
By Lemma \ref{lem:6}:
\begin{equation*}
\mathbb{P} \left( \sup_{0 \le s \le \frac{1}{m}} |\Delta X_{s,i}| \ge a \frac{1}{m} + \int_{\tau_i}^{\tau_i+1} |b(t, X_t)| dt \right) \le 2 \exp\left( - \frac{a^2 \frac{1}{m}}{2c} \right).
\end{equation*}
Put $a = m^\alpha$, where $1/2 < \alpha < 1$:
\begin{equation*}
\mathbb{P} \left( \sup_{0 \le s \le \frac{1}{m}} |\Delta X_{s,i}| \ge m^{\alpha-1} + \int_{\tau_i}^{\tau_i+1} |b(t, X_t)| dt \right) \le 2 \exp\left( - \frac{m^{2\alpha-1}}{2c} \right).
\end{equation*}
From the assumption A1:
\begin{equation*}
\int_{\tau_i}^{\tau_{i+1}} |b(t, X_t)| dt \le K_1 \frac{1}{m}.
\end{equation*}
From this we obtain:
\begin{equation*}
\mathbb{P} \left( \sup_{0 \le s \le \frac{1}{m}} |\Delta X_{s,i}| \ge m^{\alpha-1} + K_1 m^{-1} \right) \le 2 \exp\left( - \frac{m^{2\alpha-1}}{2c} \right).
\end{equation*}
Denote:
\begin{equation*}
\epsilon_m := m^{\alpha-1} + K_1 m^{-1}.
\end{equation*}
Note that the probability of the event that the deviation is greater than $\epsilon_m$ on at least one segment tends to 0. By the union bound:
\begin{equation*}
\mathbb{P} \left( \sup_{1 \le i \le m} \sup_{0 \le s \le \frac{1}{m}} |\Delta X_{s,i}| \ge \epsilon_m \right) \le 4m \exp\left( - \frac{m^{2\alpha-1}}{2c} \right).
\end{equation*}

We now bound the difference between $X_t$ and the linear interpolation $Y_t$. Assume that the chain $Y_{t_i}$ is constructed as in the proof of Theorem \ref{thm:3}. Denote:
\begin{equation*}
B := \{X_{\tau_i} = Y_{\tau_i} \text{ for all } i = 1, \dots, m\}.
\end{equation*}
By Theorem \ref{thm:3}, the processes coincide on the grid with probability:
\begin{equation*}
\mathbb{P} \left( X_{\tau_i} = Y_{\tau_i} \text{ for all } i = 1, \dots, m \right) \ge 1 - C m \Delta^{(n)}.
\end{equation*}
where $n$ is the number of steps in the micro-grid. Let us estimate the probability of divergence over the entire interval:
\begin{align}
\mathbb{P} &\left( \sup_{1 \le i \le m, 0 \le s \le \frac{1}{m}} |X_{t_i+s} - Y_{t_i+s}| \ge 2\epsilon_m \right) \nonumber \\
&= \mathbb{P} \left( \sup_{0 \le s_1 \le \frac{1}{m}} |X_{t_1+s_1} - Y_{t_1+s_1}| \ge 2\epsilon_m \lor \dots \lor \sup_{0 \le s_m \le \frac{1}{m}} |X_{t_m+s_m} - Y_{t_m+s_m}| \ge 2\epsilon_m \right) \nonumber \\
&\le \mathbb{P} \left( \left\{ \sup_{0 \le s_1 \le \frac{1}{m}} |X_{t_1+s_1} - Y_{t_1+s_1}| \ge 2\epsilon_m \lor \dots \lor \sup_{0 \le s_m \le \frac{1}{m}} |X_{t_m+s_m} - Y_{t_m+s_m}| \ge 2\epsilon_m \right\} \cap B \right) + \mathbb{P}(\bar{B}) \label{eq:diverge_prob}
\end{align}
\begin{align*}
\mathbb{P} &\left( \left\{ \sup_{0 \le s_1 \le \frac{1}{m}} |X_{t_1+s_1} - Y_{t_1+s_1}| \ge 2\epsilon_m \lor \dots \lor \sup_{0 \le s_m \le \frac{1}{m}} |X_{t_m+s_m} - Y_{t_m+s_m}| \ge 2\epsilon_m \right\} \cap B \right) \\
&= \mathbb{P} \left( \left\{ \sup_{0 \le s_1 \le \frac{1}{m}} |X_{t_1+s_1} - X_{t_1} - k_1s_1| \ge 2\epsilon_m \lor \dots \lor \sup_{0 \le s_m \le \frac{1}{m}} |X_{t_m+s_m} - X_{t_m} - k_m s_m| \ge 2\epsilon_m \right\} \cap B \right) \\
&\le \mathbb{P} \left( \sup_{0 \le s_1 \le \frac{1}{m}} |X_{t_1+s_1} - X_{t_1} - k_1s_1| \ge 2\epsilon_m \lor \dots \lor \sup_{0 \le s_m \le \frac{1}{m}} |X_{t_m+s_m} - X_{t_m} - k_m s_m| \ge 2\epsilon_m \right),
\end{align*}
where $k_i := \frac{X_{t_{i+1}} - X_{t_i}}{\frac{1}{m}}$. Now let us evaluate the probability of each deviation separately:
\begin{align*}
\mathbb{P} &\left( \sup_{0 \le s_i \le \frac{1}{m}} |X_{t_i+s_i} - X_{t_i} - k_i s_i| \ge 2\epsilon_m \right) = \mathbb{P} \left( \sup_{0 \le s_i \le \frac{1}{m}} |X_{t_i+s_i} - X_{t_i} - (X_{t_{i+1}} - X_{t_i})ms_i| \ge 2\epsilon_m \right) \\
&\le \mathbb{P} \left( \sup_{0 \le s_i \le \frac{1}{m}} |X_{t_i+s_i} - X_{t_i}| \ge \epsilon_m \right) + \mathbb{P} \left( \sup_{0 \le s_i \le \frac{1}{m}} |(X_{t_{i+1}} - X_{t_i})ms_i| \ge \epsilon_m \right) \\
&\le 2\mathbb{P} \left( \sup_{0 \le s_i \le \frac{1}{m}} |X_{t_i+s_i} - X_{t_i}| \ge \epsilon_m \right).
\end{align*}
The last equality is taken from the fact that $s_i m \le 1$. From the union bound we get:
\begin{align*}
\mathbb{P} \Bigg( \sup_{0 \le s_1 \le \frac{1}{m}} |X_{t_1+s_1} - X_{t_1} - k_1s_1| \ge 2\epsilon_m \lor \dots \\
\dots \lor \sup_{0 \le s_m \le \frac{1}{m}} |X_{t_m+s_m} - X_{t_m} - k_m s_m| \ge 2\epsilon_m \Bigg) \\
\le 4m \exp\left( - \frac{m^{2\alpha-1}}{2c} \right).
\end{align*}
Combining with the probability of divergence of two processes, the expression from \eqref{eq:diverge_prob} is estimated as:
\begin{equation*}
\mathbb{P} \left( \sup_{1 \le i \le m, 0 \le s \le \frac{1}{m}} |X_{t_i+s} - Y_{t_i+s}| \ge 2\epsilon_m \right) \le 4m \exp\left( - \frac{m^{2\alpha-1}}{2c} \right) + C m \Delta^{(n)}.
\end{equation*}

Now let us consider the multidimensional case. Let $X_t = (X^{(1)}_t, \dots, X^{(d)}_t)$. For each coordinate $l \in \{1, \dots, d\}$, the diffusion part of vector $\Delta X^l_{s,i}$ is given by:
\begin{equation*}
\Delta M^{(l)}_{s,i} := \sum_{k=1}^d \int_{\tau_i}^{\tau_i+s} \sigma_{lk}(t, X_t) dW^{(k)}_t.
\end{equation*}
As a finite sum of continuous local martingales, $M^{(l)}_{s,i}$ is itself a continuous local martingale. Furthermore, its quadratic variation over a time interval exhibits at most linear growth:
\begin{equation*}
\langle \Delta M^{(l)}_{\cdot, i} \rangle_s = \sum_{k=1}^d \int_{\tau_i}^{\tau_i+s} \sigma^2_{lk}(t, X_t) dt \le \tilde{C} s,
\end{equation*}
for some constant $\tilde{C} > 0$. If the Euclidean norm $|\Delta M_{t,i}|$ exceeds $at$, at least one coordinate fluctuates by more than $at/\sqrt{d}$. By the union bound:
\begin{equation*}
\mathbb{P} \left( \sup_{0 \le s \le \frac{1}{m}} |\Delta M_{s,i}| \ge at \right) \le \sum_{l=1}^d \mathbb{P} \left( \sup_{0 \le s \le \frac{1}{m}} |\Delta M^{(l)}_{s,i}| \ge \frac{at}{\sqrt{d}} \right) \le d \exp\left( - \frac{a^2 t}{2\tilde{c}d} \right),
\end{equation*}
for some constant $\tilde{c} > 0$. The rest of the argument repeats the one-dimensional case. Therefore there exist constants $C_1, \tilde{c} > 0$ such that for all $l = 1, \dots, d$:
\begin{equation*}
\mathbb{P} \left( \sup_{1 \le i \le m, 0 \le s \le \frac{1}{m}} |\Delta X_{s,i}| \ge m^{\alpha-1} + K_1 m^{-1} \right) \le 2md \exp\left( - \frac{m^{2\alpha-1}}{2\tilde{c}d} \right)
\end{equation*}
and
\begin{equation*}
\mathbb{P} \left( \sup_{0 \le i \le m, 0 \le s \le \frac{1}{m}} |X_{s,i} - Y_{s,i}| \ge 2(m^{\alpha-1} + K_1 m^{-1}) \right) \le 4md \exp\left( - \frac{m^{2\alpha-1}}{2\tilde{c}d} \right) + m C_1 \Delta^{(n)},
\end{equation*}
where the constant $\tilde{c}$ does not depend on the dimensionality of the space.
\end{proof}

\printbibliography

@article{ESAIM,
  author    = {Valentin Konakov  and Anna Kozhina and Stéphane Menozzi},
  title     = {Stability of Densities for Perturbed Diffusions and Markov Chains},
  journal   = {ESAIM: Probability and Statistics},
  volume    = {21},
  pages     = {384--412},
  year      = {2017},
  doi       = {10.1051/ps/2016028}
}

@article{Bitter,
  title={Asymptotic version of the parametrix method for Markov chains converging to diffusions},
  author={Bitter, Ilya and Konakov, Valentin},
  journal={arXiv preprint arXiv:2505.24548},
  year={2025},
  eprint={2505.24548},
  url={https://arxiv.org/abs/2505.24548}
}

@article{Kozhina,
  author    = {Kozhina, Anna},
  title     = {Stability of Densities for Perturbed Degenerate Diffusions},
  journal   = {Theory of Probability \& Its Applications},
  volume    = {61},
  number    = {3},
  pages     = {489--499},
  year      = {2017},
  publisher = {SIAM},
  doi       = {10.1137/S0040585X97T988290}
}

@phdthesis{KozhinaThesis,
  author       = {Kozhina, Anna},
  title        = {Parametrix Method and its Applications in Probability Theory},
  school       = {Ruprecht-Karls-Universit{\"a}t Heidelberg},
  year         = {2018},
  doi          = {10.11588/heidok.00025129},
  type         = {Dissertation}
}

@book{RevuzYor1999,
  title={Continuous Martingales and Brownian Motion},
  author={Revuz, Daniel and Yor, Marc},
  edition={3rd},
  volume={293},
  series={Grundlehren der mathematischen Wissenschaften},
  year={1999},
  publisher={Springer-Verlag Berlin Heidelberg},
  doi={10.1007/978-3-662-06400-9}
}

@book{den2012probability,
  author    = {den Hollander, F.},
  title     = {Probability Theory: The Coupling Method},
  pages     = {1--73},
  year      = {2012},
  publisher = {Leiden University},
  address   = {Leiden}
}

@article{galeati2023stability,
  author    = {Galeati, L. and Ling, C.},
  title     = {Stability estimates for singular {SDEs} and applications},
  journal   = {Electron. J. Probab.},
  volume    = {28},
  year      = {2023},
  number    = {24},
  pages     = {1--31}
}

\end{document}